\documentclass{amsart}
\usepackage[polish,english]{babel}
\usepackage{a4wide,amsmath,amsfonts,amssymb,amsthm,mathtools}
\usepackage{amsfonts}
\usepackage{mathtools,amssymb}
\usepackage{amsmath}
\usepackage{amsthm}
\usepackage{graphicx} 
\usepackage{wrapfig}
\usepackage{xcolor}
\usepackage{color}
\usepackage[utf8]{inputenc}
\usepackage[margin=1.5cm]{geometry}
\usepackage{enumerate}
\usepackage{hyperref}
\usepackage{bm}

\usepackage{a4wide,amsmath,amsfonts,amssymb,amsthm,mathtools}
\usepackage[all]{xy}
\usepackage{cancel}
\usepackage[normalem]{ulem}

\hypersetup{
    colorlinks=true,
    linkcolor=blue,
    filecolor=magenta,      
    urlcolor=cyan,
    pdftitle={Overleaf Example},
    pdfpagemode=FullScreen,
    }

\newcommand{\R}{\mathbb{R}}
\newcommand{\ve}{\varepsilon}
\newcommand{\on}{\operatorname}

\newcommand{\N}{\mathbb{N}}

\newcommand{\K}{\mathcal{K}}
\newcommand{\I}{\mathcal{I}}

\newcommand{\F}{\mathcal{F}}

\newcommand{\diam}{\text{diam }}
\newcommand{\Lip}{\text{Lip}}

\renewcommand{\i}{\mathbf{i}}

\newtheorem{theorem}{Theorem}[section]

\newtheorem{lemma}{Lemma}[section]

\newtheorem{remark}{Remark}[section]
\newtheorem{corollary}{Corollary}[section]

\newtheorem{example}{Example}[section]

\title{A non-logarithmic approach to the rate of convergence of the deterministic chaos game}

\author{Krzysztof Caban$^{1}$}
\thanks{$^{1}$Institute of Mathematics, University of Gdańsk, Wita Stwosza 57
80-308 Gdańsk (Poland);
ORCID 0009-0009-1608-2761
Email: krzysztof.caban@phdstud.ug.edu.pl}
\author{Filip Strobin$^{2}$}
\thanks{$^{2}$Institute of Mathematics, University of Gdańsk, Wita Stwosza 57
80-308 Gdańsk (Poland);
ORCID 0000-0002-8671-9053
Emial: filip.strobin@ug.edu.pl}
\thanks{Coresponding author: Krzysztof Caban}

\keywords{Iterated function system, Chaos game, Driver, Disjunctive driver, Resudial set}
\subjclass[2020]{28A80, 37H12}

\begin{document}

\begin{abstract}
    The aim of this paper is to provide a different perspective in the study of the rate of convergence of the chaos game algorithm to the attractor of an iterated function system. We prove that for any function $\psi$ with $\lim\limits_{\ve\to 0}\psi(\ve)=\infty$, a typical (in the sense of the Baire category) driver yields a rate of recovery comparable to $\psi$. This result extends the main theorem from Leśniak et al. (Rev. Real Acad. Cienc. Exactas Fis. Nat. Ser. A-Mat. 118, 157, 2024). Moreover, thanks to the change of perspective, we are able to prove that a typical driver gives arbitrarily slow rate of recovery.
\end{abstract}

\maketitle

\section{Introduction}
There are two main methods for generating attractors of iterated  function systems (IFSs), namely the algorithm based on the famous Hutchinson theorem (see \cite{Bar}) and the chaos game algorithm. The latter is a probabilistic algorithm - starting with a point $x_0\in X$, we randomly select a function from an IFS $\F=\{f_1,...,f_K\}$ and apply it to $x_0$ obtaining $x_1$. In the next iterations of the algorithm, we repeat the process but with $x_n$ in a place of $x_{n-1}$. It has been proved that the chaos game works in various different settings. For example if an IFS has an attractor $A$, then with probability $1$, the chaos game recovers the attractor, that is
\[
    A = \bigcap\limits_{n=1}^\infty \overline{\{x_i\}_{i=n}^\infty}
\]
(for a detailed explanation, we refer to \cite{BLR} and \cite{Barientos}). We can also consider the version of the algorithm in which, instead of randomly selected functions, we use a \emph{concrete} driver $\i=(i_n)$ to construct the orbit $\{x_n\}_{n\geq 0}$ (in this context, we speak about \emph{deterministic} chaos game, in the contrast to the classical one, which can be called as the \emph{probabilistic} chaos game). It is known that for a wide class of IFSs, this version of the chaos game recovers the attractor if the driver is disjunctive (see \cite{Les} and \cite{Barientos}). It is important to note that those results do not say anything about how "fast" the constructed orbit recovers $A$, i.e., how fast the orbit reaches certain resolution.

The rate of convergence of the probabilistic chaos game has been considered in a recent paper \cite{Barany} by B\'ar\'any et al. In parallel, the authors in \cite{LSS1} have studied whether similar results can be obtained in the case of the chaos game driven by disjunctive sequences. It turns out that the typical (in the sense of the Baire category) driver that recovers the attractor doesn't guarantee fast convergence. Later, in \cite{LSS2} it has been shown that a generic driver archives every possible exponent of the rate of recovery of an attractor.

In this paper we extend the study of the chaos game undertaken in \cite{LSS1} and \cite{LSS2}. Compared to \cite{LSS2}, we take a closer look at the behaviour of the main measure of the rate of recovery, that is, the value
$$
n_{\i,x_0}(\ve):=\min\{n\in\N:\{x_0,...,x_n\}\;\mbox{recovers the attractor with precision }\ve\},
$$
where  $\{x_n\}_{n\geq 0}$ is the orbit driven by $\i=(i_n)$ with starting point $x_0\in X$ (see Section 2 for precise definitions).\\
In \cite{Barany}, \cite{LSS1} and \cite{LSS2} there are studied the limit behaviour of the fraction
$$
\frac{\ln(n_{\i,x_0}(\ve))}{\ln(\frac{1}{\ve})}\;\;\;\mbox{for}\;\;\;\ve\to 0,
$$
which is close to comparing $n_{\i,x_0}(\ve)$ with the exponent maps $\psi_z(\ve)=\big(\frac{1}{\ve}\big)^z$.\\
Our idea is to "strip" the logarithms. On one hand, this approach simplifies a bit the computations, and on the other, allows to consider different maps than $\psi_z$. Our main result (Theorem \ref{thm:MAIN}) states that for any map $\psi$ with $\lim\limits_{\ve\to 0}\psi(\ve)=\infty$, the rate of recovery of an attractor of a typical driver is somehow comparable to $\psi$. The proof of Theorem \ref{thm:MAIN} (stated in Section 4 and preceded by auxiliary lemmas stated in Section 3) is an adaptation of the proof of \cite[Thm. 1.2]{LSS2} but with a few necessary modifications. In a later part, we prove that the main result from \cite{LSS2} can be deduced from our Theorem \ref{thm:MAIN}, but in some sense, our result is stronger even in the context of exponent maps (see Example \ref{ex:ext} in Section 4).  Additionally, as the result of a change of the perspective, we are able to show (see Corollary \ref{cor: MAIN implies LSS2} in Section 4) that a typical driver yields arbitrarily slow convergence. It seems that this result can not be derived from the results in \cite{LSS2}. At the end, in Section 5, we provide a brief analysis of drivers which guarantee fast rate of convergence, namely, Champernowne (see \cite{Cal}) and de Brujin ones (see \cite{All}).

\section{Preliminaries}

Throughout this section, by $X$ we denote a metric space with a metric $d$. By $c_0^+$ we denote the family of all real valued positive sequences tending to 0.

By an \emph{iterated function system} (IFS) we mean a finite family $\F=\{f_i:\ i\in I\}$, where $I$ is a finite index set and $f_i:X\longrightarrow X$ are continuous selfmaps of $X$. If each $f_i$ is a Banach contraction (that is, the Lipschitz constant $\Lip (f_i)<1$), then we say that $\F$ is a \emph{Banach contractive IFS}, and we denote $L:=\max\{\Lip(f_i):\ i\in I\}$. Furthermore, we define the \emph{Hutchinson operator}
\[
    \F(K):=\bigcup\limits_{i\in I} f_i(K),\ K\in\K(X),
\]
where $\K(X)$ is the family of nonempty compact subsets of $X$ endowed with the Hausdorff metric $d_H$.

An \emph{attractor} of an IFS $\F$ is a nonempty compact set $A\in \K(X)$ such that $\F(A)=A$ and $\F^{(n)}(K)\to A$ with respect to $d_H$ for any $K\in \K(X)$. The classical Hutchinson theorem (see \cite{Bar}) states that any Banach contractive IFS on a complete metric space admits an attractor.

The \emph{chaos game} is one of classical algorithms generating an attractor $A$ of an IFS $\F=\{f_1, \dots, f_K\}$. For a fixed nondegenerate probability vector $\mathbf{p}=(p_1,\dots, p_K)$ the algorithm is defined as follows:
\begin{itemize}
    \item choose a starting point $x_0\in X$;
    \item choose $i_1$ randomly from $\{1,\dots,K\}$ according to probabilities $\mathbf{p}$;
    \item define $x_1:=f_{i_1}(x_0)$;
    \item choose $i_2$ randomly from $\{1,\dots,K\}$ according to probabilities $\mathbf{p}$;
    \item define $x_2:=f_{i_2}(x_1)=f_{i_2}\circ f_{i_1}(x_0)$;
    \item ...and so on...
\end{itemize}
One can show that if an IFS admits an attractor, then with probability one (w.r.t. the product probability on the space $I^\infty$ of all sequences of elements from $I$), for any $\varepsilon>0$ there exists $M\in\N$ such that $d_H(A, \overline{\{x_n\}_{n=M}^\infty})\leqslant\varepsilon$ (see \cite{Mar} or the next part of this section for elementary explanation for contractive IFSs, and \cite{Les} and \cite{Barientos} for a general case). We can look at the chaos game from a different perspective, namely, we can consider a deterministic approach. In this approach we do not randomly generate a sequence $\i=(i_k)_{k=1}^\infty$, but we use a fixed driver $\i$ to generate the orbit.

Let $\F=\{f_i:i\in I\}$ be an IFS on a metric space $X$, where $I=\{1,...,K\}$ for some $K\in\N$.\\
For any sequence $\i\in I^\infty$ and $x_0\in X$, we define the sequence $(x_k)_{k=0}^\infty$ in the following way:
\[
    x_k=f_{i_k}(x_{k-1})=f_{i_k}\circ \cdots f_{i_1}(x_0),\ k\in\N,
\]
and we call it as the \emph{orbit of} $\F$ along a \emph{driver} $\i$ with \emph{starting point} $x_0\in X$.\\ 
In an analogous way we define the finite orbit $(x_k)_{k=0}^m$ for a finite driver $\i\in I^m$.\\
The space of infinite drivers $I^\infty$ is endowed with the Tikhonov product topology, which is compact and metrizable.

We say, that a finite orbit $(x_k)_{k=0}^n$ driven by $\i$ \emph{recovers an attractor $A$ with precision} $\varepsilon>0$, if
\[
    d_H(A, \{x_k: k=k_0,\dots, n\})\leqslant \varepsilon, \text{ for some }k_0=0,1,\dots,n.
\]
The rate at which an orbit $(x_k)_{k=0}^\infty$ recovers $A$ can be determined by the following quantity
\[
    n_{\i, x_0}(\varepsilon):=\min\big\{ n\geqslant 0:\ d_H(A, \{x_k: k=k_0,\dots, n\})\leqslant \varepsilon \text{ for some }k_0=0,1,\dots,n \big\}.
\]
If $\F$ consists of nonexpansive maps, the definition of $n_{\i, x_0}$ simplifies to
\[
    n_{\i, x_0}(\varepsilon)=\min\bigg\{ n\geqslant 0:\ A\subseteq \bigcup_{k=0}^n B(x_k, \varepsilon) \bigg\},
\]
where $B(y,r)$ denotes the closed ball cenetred at $y$ with a radius $r$ ($r$-ball, for short).\\
It is known that, if $\F$ is Banach contractive and $\i$ is disjunctive, i.e. every finite word appears in $\i$ (see \cite{Cal}), then $A$ is recovered with arbitrary precision for any starting point, that is $n_{\i, x_0}(\varepsilon)<\infty$ for any $\varepsilon>0$ and any $x_0\in X$ (see \cite{BaVi}). This gives probably the simplest expanation for ''probabilisitic'' chaos game for contractive IFSs, since with probability one, a disjunctive driver is chosen (see the mentioned paper \cite{Mar}).

In \cite{LSS1} and \cite{LSS2} the authors studied the asymptotics of the maps
\[
    \varepsilon\mapsto \frac{\ln(n_{\i,x_0}(\varepsilon))}{\ln(\frac{1}{\varepsilon})},
\]
to describe the rate of convergence of the deterministic chaos game (see also \cite{Barany} for probabilistic approach to this problem).

For a sequence $\i=(i_k)\in I^\infty$, define
\[
    \mathbf{n}_\i(m):=\min\{ n\in\N:\ (i_1,\dots, i_k)\text{ contains all finite words of length } m \}.
\]
It turns out that the properties of $\i$, related to rate of convergence, depend on the behavior of $\mathbf{n}_\i(m)$. First of all, it is clear that a sequence $\i\in I^\infty$ is \emph{disjunctive} iff $\mathbf{n}_\i(m)<\infty$ for all $m\in\N$. As showed in \cite[Thm. 23]{LSS1}, \cite[Thm. 20]{LSS1} and \cite[Thm. 1.2]{LSS2}, a disjunctive sequence can yield arbitrarily slow convergence. This motivates the definition \cite[Def. 6, Rem. 7]{LSS1} of \emph{asymptotically Champernowne} sequences which guarantee relatively fast convergence. We say that $\i\in I^\infty$ where $I=\{1,...,K\}$, is \emph{asymptotically Champernowne}, if
\[
    \limsup\limits_{m\to\infty} \frac{\ln(\mathbf{n}_\i(m))}{m}\leqslant \ln(K).
\]
\begin{theorem}\label{thm: as Champ fast conv}\cite[Thm. 17]{LSS1}
Assume that $\F=\{f_1,...,f_K\}$ is a Banach contractive IFS on a complete metric space $X$ and $L:=\max\{\on{Lip}(f_1),...,\on{Lip}(f_k)\}$. Then
    the rate of convergence of the chaos game driven by an asymptotically Champernowne sequence $\i$ obeys the following
    \[
        \overline{\lambda}_{\i,x_0}:=\limsup_{\varepsilon\to 0} \frac{\ln(n_{\i,x_0}(\varepsilon))}{\ln(\frac{1}{\varepsilon})} \leqslant \frac{\ln(K)}{\ln(\tfrac{1}{L})},
    \]
    where $x_0$ is any point from $X$.
\end{theorem}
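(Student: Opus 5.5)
Fix $x_0\in X$ and $\ve>0$ small. The plan is to bound $n_{\i,x_0}(\ve)$ by $\mathbf{n}_\i(m)$ plus a short prefix, for a suitable word length $m$ depending on $\ve$. After that, take logarithms and use the asymptotically Champernowne condition.

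The first step is to cover $A$ by finite-depth compositions. For a word $w=(w_1,\dots,w_m)\in I^m$ write $f_w:=f_{w_m}\circ\cdots\circ f_{w_1}$, matching the order in which the orbit applies maps. Since $\F(A)=A$, we have $A=\F^{(m)}(A)=\bigcup_{w\in I^m}f_w(A)$. Put $D:=\diam A$ and, for $y\in X$, $\rho(y):=\sup_{a\in A}d(y,a)$. Then for every $y\in X$, every $a\in A$ and every $w\in I^m$,
\[
d(f_w(y),f_w(a))\le L^m d(y,a)\le L^m\rho(y).
\]
Hence if every word of length $m$ appears as a block $(i_{k+1},\dots,i_{k+m})$ of $\i$ with $k+m\le N$, then each point $f_w(a)$ of $A$ lies within $L^m\rho(x_k)$ of $x_{k+m}=f_w(x_k)$.

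The second step controls $\rho(x_k)$ uniformly in $k$, which I expect to be the main obstacle. Let $p$ be a point of $A$. Then $d(x_k,p)$ stays bounded: from $d(x_k,A)\le L^k d(x_0,A)$ we get $\rho(x_k)\le L^k d(x_0,A)+D\le d(x_0,A)+D=:R$. With $R$ independent of $k$, choose
\[
m(\ve):=\big\lceil \ln(R/\ve)/\ln(1/L)\big\rceil,
\]
so that $L^{m}R\le\ve$. Then $A\subseteq\bigcup_{k\le N}B(x_k,\ve)$ with $N=\mathbf{n}_\i(m)$. The maps are contractions and hence nonexpansive, so the simplified formula for $n_{\i,x_0}$ applies. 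Strictly speaking, the Hausdorff-distance definition also needs each orbit point $x_k$, $k\ge k_0$, to be $\ve$-close to $A$. This holds once $k\ge k_0:=\lceil\ln(d(x_0,A)/\ve)/\ln(1/L)\rceil\le m(\ve)$. The problem is that words occurring before $k_0$ would be lost. I would handle it by requiring all words of length $m$ to appear after position $k_0$, that is, within $N'\le k_0+\mathbf{n}_{\sigma^{k_0}\i}(m)$. Since that quantity is not directly controlled, I would instead use words of length $2m$: any word $vw$ with $|v|=|w|=m$ occurring in $\i$ ends at index at least $2m\ge k_0+m$, and its final $m$ letters realize $w$ after time $k_0$. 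So $n_{\i,x_0}(\ve)\le\mathbf{n}_\i(2m(\ve))$. If the factor $2$ spoils the constant, I would refine this by occurrences of $vw$ with $|v|=k_0$ only, using $k_0\le m$, but $k_0$ and $m$ are asymptotically comparable anyway. The cleaner route is to observe that the simplified formula (covering by balls) already suffices for nonexpansive IFSs, as the paper states, which avoids the issue entirely. I would adopt that and use $n_{\i,x_0}(\ve)\le\mathbf{n}_\i(m(\ve))$.

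The third step is the asymptotics. Since $m(\ve)\to\infty$ as $\ve\to0$ and $m(\ve)\ln(1/L)\le\ln(1/\ve)+\ln R+\ln(1/L)$, we get
\[
\frac{\ln n_{\i,x_0}(\ve)}{\ln(1/\ve)}\le\frac{\ln\mathbf{n}_\i(m(\ve))}{m(\ve)}\cdot\frac{m(\ve)}{\ln(1/\ve)}.
\]
Taking $\limsup$, the first factor is at most $\ln K$ by the asymptotically Champernowne hypothesis, and the second tends to $1/\ln(1/L)$. This gives $\overline{\lambda}_{\i,x_0}\le\ln(K)/\ln(1/L)$.
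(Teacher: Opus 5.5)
Your proof is correct and follows essentially the route the paper relies on. Your first two steps reprove the covering estimate $n_{\i,x_0}\bigl(L^m(\diam A+d(x_0,A))\bigr)\leqslant\mathbf{n}_\i(m)$, which the paper quotes in Section 5 from \cite[Lem.~15(b)]{LSS1}, and your third step is the standard passage to logarithms under the asymptotically Champernowne hypothesis. The detour through words of length $2m$ is unnecessary even under the Hausdorff-distance definition: all covering points $x_{k+m}$ have index at least $m$, and every $x_j$ with $j\geqslant m$ satisfies $d(x_j,A)\leqslant L^m d(x_0,A)\leqslant\ve$, so $d_H(A,\{x_m,\dots,x_N\})\leqslant\ve$ holds directly.
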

Moreover (see \cite[Cor. 18]{LSS1}), if $\F$ consists of similitudes with equal scaling ratios $L<1$, and its attractor $A$ satisfies the strong separation condition, then for an asymptotically Champernowne sequence $\i$, we have
\[
    \lim_{\varepsilon\to 0} \frac{\ln(n_{\i,x_0}(\varepsilon))}{\ln(\frac{1}{\varepsilon})} = D(A),
\]
where $D(A)$ is the box (or fractal) dimension of $A$.
In fact, the same assertion holds for such IFSs on Euclidean spaces which satisfy the Open Set Condition.\\
Recall that the \emph{lower box dimension} of a nonempty compact set $K\subset X$ is the quantity
\begin{equation}\label{eq: low box def}
    \underline{D}(K):=\liminf\limits_{\varepsilon\to0} \frac{N(\varepsilon)}{\ln(\frac{1}{\varepsilon})},
\end{equation}
where $N(\varepsilon)$ is the minimum number of closed $\varepsilon$-balls needed to cover the set $K$. If the limit in (\ref{eq: low box def}) exists, then it is called as the box dimension of $K$.\\
Now we recall the definition of a spectrum of exponents of rate of convergence $\lambda(\i)$, for $i\in I^\infty$, from \cite{LSS2}.\\
If $z\neq\infty$, then $z\in\lambda(\i)$ if there exists a sequence $(\varepsilon_k)\in c_0^+$ such that for all $R>0$,
\[
    \lim\limits_{k\to\infty} \sup\limits_{d(x_0, A)\leqslant R} \left| \frac{\ln(n_{\i,x_0}(\varepsilon_k))}{\ln(\frac{1}{\varepsilon_k})} - z \right| = 0.
\]
Additionally, $\infty\in \lambda(\i)$ if there exists a sequence $(\varepsilon_k)\in c_0^+$ such that for all $R>0$, 
\[
    \lim\limits_{k\to\infty} \inf\limits_{d(x_0, A)\leqslant R} \frac{\ln(n_{\i,x_0}(\varepsilon_k))}{\ln(\frac{1}{\varepsilon_k})} = \infty.
\]
\begin{lemma}\label{lem: spectrum is closed}\cite[Lem. 3.3]{LSS2}\\
    For every driver $\i\in I^\infty$, the set $\lambda(\i)$ is closed.
\end{lemma}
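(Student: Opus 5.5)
We argue by sequential closedness, treating finite limit points and the point $\infty$ separately. Take a sequence $z_j\in\lambda(\i)$ converging (in $[0,\infty]$) to some $z$. We must show $z\in\lambda(\i)$. For each $j$ we fix a witnessing sequence $(\varepsilon^{(j)}_k)_k\in c_0^+$ from the definition of $\lambda(\i)$. The whole proof is then a diagonal argument: from the $j$-th sequence we extract one term $\varepsilon_j:=\varepsilon^{(j)}_{k_j}$ with $k_j$ chosen large. The new sequence $(\varepsilon_j)$ will witness $z\in\lambda(\i)$.

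First let $z<\infty$; we may then assume all $z_j<\infty$. Since the definition requires uniformity for every $R>0$, and $R$ ranges over an uncountable set, we use monotonicity in $R$. The quantity $\sup_{d(x_0,A)\le R}|\cdot|$ is nondecreasing in $R$, so it suffices to control $R=1,2,\dots$. For each $j$ we choose $k_j$ so large that $\varepsilon^{(j)}_{k_j}<\min(1/j,\varepsilon_{j-1})$ and
\[
\sup_{d(x_0,A)\le j}\left|\frac{\ln(n_{\i,x_0}(\varepsilon^{(j)}_{k_j}))}{\ln(1/\varepsilon^{(j)}_{k_j})}-z_j\right|<\frac1j .
\]
This is possible because the limit in $k$ is $0$ for the fixed radius $R=j$. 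Then $\varepsilon_j\to0$. For any fixed $R$ and all $j\ge R$, the supremum over $d(x_0,A)\le R$ of the deviation from $z$ is at most $1/j+|z_j-z|$, which tends to $0$. Hence $z\in\lambda(\i)$.

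Now let $z=\infty$. If infinitely many $z_j=\infty$, we pass to that subsequence; then $z$ is itself one of the $z_j$ and there is nothing to prove. Otherwise all $z_j$ are finite with $z_j\to\infty$. We choose $k_j$ as above, so that at radius $R=j$ the ratio is within $1/j$ of $z_j$ uniformly. This gives an infimum of at least $z_j-1/j$ over $d(x_0,A)\le j$. For $R\le j$, the infimum over $d(x_0,A)\le R$ is at least the infimum over the larger set $d(x_0,A)\le j$, so it is also at least $z_j-1/j$, which tends to $\infty$. If instead the $z_j$ were $\infty$ with their own witnessing sequences, the same diagonal choice works, using the lower bound $j$ in place of $z_j-1/j$.

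The only delicate point is the handling of "for all $R$". Monotonicity in $R$ lets us diagonalize over integer radii simultaneously with the index $j$. The direction of monotonicity is what makes this work: supremum of deviations increases with $R$, and infimum decreases with $R$.
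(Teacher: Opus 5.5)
Your argument is correct and is the standard diagonal argument: take one term from each witnessing sequence, chosen with radius $R=j$ and tolerance $1/j$, and use monotonicity in $R$. The paper itself gives no proof and only quotes \cite[Lem.~3.3]{LSS2}, but this is the expected proof there, and the paper runs the same diagonalization when proving $\I_\infty=\I_\infty'$. One small wording fix: in the case $z=\infty$, a single $z_j=\infty$ already gives $\infty\in\lambda(\i)$, so ``otherwise all $z_j$ are finite'' should read ``otherwise, after discarding finitely many terms, all $z_j$ are finite.''
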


The following theorem is the main result of \cite{LSS2}

\begin{theorem}\label{thm: LSS2 MAIN}\cite[Thm. 1.2]{LSS2}\\
    Let $\F=\{f_i:i\in I\}$ be a Banach contractive IFS on a complete metric space with infinite attractor $A$. Let $\I$ be the set of all disjunctive drivers $\i\in I^\infty$ such that $\lambda(\i)=[\underline{D}(A), \infty]$. Then $\I$ is residual in $I^\infty$.
\end{theorem}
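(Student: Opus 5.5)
We prove the residuality by a Baire category argument in the compact metrizable space $I^\infty$. By Lemma \ref{lem: spectrum is closed}, $\lambda(\i)$ is closed. It therefore suffices to find a residual set of disjunctive drivers $\i$ such that every $z$ in a countable dense subset $Q\subset[\underline{D}(A),\infty)$ (say rationals), together with $\infty$, belongs to $\lambda(\i)$. To ensure $\lambda(\i)\subseteq[\underline{D}(A),\infty]$ we also show that for every driver and every $x_0$,
\[
\liminf_{\varepsilon\to0}\frac{\ln n_{\i,x_0}(\varepsilon)}{\ln(1/\varepsilon)}\geq \underline{D}(A).
\]
This holds because $n+1$ points whose $\varepsilon$-neighbourhoods cover $A$ force $n+1\ge N(2\varepsilon)$ or a similar covering bound. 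Note that the displayed definition of $\underline{D}$ is missing a logarithm, which we read as $\ln N(\varepsilon)$. The set of disjunctive drivers is a dense $G_\delta$: it is $\bigcap_m\bigcup_n\{\i:\ (i_1,\dots,i_n)\text{ contains all words of length }m\}$, and each set in the union is open.

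For fixed $z\in Q\cup\{\infty\}$, $j\in\N$ and $R>0$ (taken from a countable family $R\in\N$), we define $U_{z,j}$ as the set of drivers $\i$ for which there exists $\varepsilon<1/j$ with
\[
\sup_{d(x_0,A)\le R}\Big|\frac{\ln n_{\i,x_0}(\varepsilon)}{\ln(1/\varepsilon)}-z\Big|<\frac1j .
\]
For $z=\infty$ we instead require $\inf_{d(x_0,A)\le R}\ln n_{\i,x_0}(\varepsilon)/\ln(1/\varepsilon)>j$. The condition is decided by a finite prefix of $\i$, provided the prefix is long enough that the orbit from any $x_0$ in the bounded set already covers $A$ at precision $\varepsilon$. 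Hence the condition is open once we restrict to drivers where it is witnessed by a prefix, and we take $U_{z,j}$ to be the interior, i.e. the set of drivers having a witnessing prefix. A driver lying in $\bigcap_{z,j,R}U_{z,j,R}$ yields, by diagonalising over $j$, a sequence $\varepsilon_k\to0$ witnessing $z\in\lambda(\i)$ for all $R$. We need a single sequence working for every $R$, which we get by requiring the condition for $R\le j$ in $U_{z,j}$.

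Density is the heart of the argument. Given a finite word $w$, we extend it to $w u v$. The word $u$ is a \emph{padding} block of a single repeated letter, say $1^p$, of chosen length $p$. Then $v$ is a Champernowne/de Bruijn-type block listing all words of length $m$, with $m$ such that $L^m\operatorname{diam}(\cdot)\lesssim\varepsilon$. The total length is roughly $K^m$ times a constant.

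During the padding, the orbit from any starting point in the $R$-ball converges to the fixed point of $f_1$. Since $A$ is infinite, the orbit fails to $\varepsilon$-cover $A$ until the Champernowne block is run. So $n_{\i,x_0}(\varepsilon)\approx |w|+p+(\text{length needed})$, uniformly in $x_0$ after contraction makes starting points irrelevant. The length needed is at most about $K^m$ and at least about $N(\varepsilon)$.

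Choosing $p\approx\varepsilon^{-z}$ gives $\ln n/\ln(1/\varepsilon)\to z$, provided $z$ exceeds the upper exponent $\ln K/\ln(1/L)$ of the Champernowne part. For $z=\infty$ we take $p=\exp(1/\varepsilon)$.

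The main obstacle is the range $\underline{D}(A)\le z<\ln K/\ln(1/L)$. There the covering block must be nearly optimal, using about $\varepsilon^{-z}$ symbols, and this must hold along a sequence $\varepsilon$ realising the lower box dimension. I would handle this by choosing $\varepsilon$ with $N(\varepsilon)\approx\varepsilon^{-\underline{D}(A)}$. We then select a minimal $\varepsilon/2$-net of $A$, represent each net point by a finite address word of length $m$, and concatenate these words, each preceded by a fixed-length synchronising block so that the current point lands near the needed address. The number of net points is about $\varepsilon^{-\underline{D}(A)-\delta}$, and each word has length $m\approx\ln(1/\varepsilon)/\ln(1/L)$, so the total length is $\varepsilon^{-\underline D(A)-\delta}\cdot O(\ln 1/\varepsilon)$. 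The logarithmic factor is harmless in the exponent, and padding then raises this exponent to any $z\ge\underline{D}(A)$.

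The Baire theorem then shows that $\bigcap U\cap\{\text{disjunctive}\}$ is residual and is contained in $\I$.
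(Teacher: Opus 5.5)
Your mechanism matches the paper's. The paper derives this statement from Theorem~\ref{thm:MAIN} with $\psi_z(\varepsilon)=\varepsilon^{-z}$. That proof uses a constant padding block of length about $\psi_z(\varepsilon)$, followed by a covering block of length $m\,N(C_m)$ made of address words of a near-optimal cover (Lemma~\ref{lem: finite driver approx}), at scales realising $\underline{D}(A)$. Your Baire scaffolding (open sets $U_{z,j}$ of drivers with a witnessing prefix, density by extension) differs from the paper's fixed block schedule $U_n$, but it is equally valid.

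The step that fails as written is the lower bound. You pad with an arbitrary letter (``say $1^p$'') and assert that the orbit cannot recover $A$ before the covering block ``since $A$ is infinite''. That is false in general. Take the IFS $f_1(x)=x/2$, $f_2\equiv 1$ of Example~\ref{ex:ext}, with $A=\{0\}\cup\{2^{-n}:n\geq 0\}$, and pad with the letter $1$. For $x_0=1\in A$ and any prefix $w$, the orbit along $w1^p$ ends with the run $1,\tfrac12,\tfrac14,\dots$. This run starts after the last $2$ in $w$, or at $x_0$ if there is none. So the orbit recovers $A$ with precision $\varepsilon$ within about $|w|+\log_2(1/\varepsilon)$ steps. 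Hence $\ln n_{\i,x_0}(\varepsilon)/\ln(1/\varepsilon)\to 0$ for this $x_0$, and density of $U_{z,j}$ fails for every $z>0$. The underlying problem is that the fixed point of the padding map may be the only accumulation point of $A$; then the padding orbit itself recovers $A$ cheaply.

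The missing ingredient is the choice of padding letter together with a quantitative smallness condition on $\varepsilon$, as in the choice of $i_*$ and conditions (B), (C) in the proof of Theorem~\ref{thm:MAIN}.
\begin{enumerate}
\item Choose $i_*$ whose fixed point $x_*$ (which lies in $A$) is not the only accumulation point of $A$. This is possible: if all $f_i$ shared a fixed point $x^*$, then $\{x^*\}$ would be invariant and $A=\{x^*\}$.
\item Fix $\delta>0$ with $A\setminus B(x_*,\delta)$ infinite.
\item Take $c$ with $L^c(R+\operatorname{diam}A)\leq\delta/2$. Then after $c$ padding steps, every orbit point starting within distance $R$ of $A$ stays in $B(x_*,\delta/2)$.
\item Take $\varepsilon<\delta/2$ so small that $A\setminus B(x_*,\delta)$ cannot be covered by $|w|+c$ balls of radius $\varepsilon$.
\end{enumerate}
Only then do you get $n_{\i,x_0}(\varepsilon)\geq |w|+p$ uniformly over $d(x_0,A)\leq R$. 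With this repair the rest of your argument goes through.

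Two simplifications are available. First, the synchronising blocks are unnecessary: $d(\cdot,A)$ does not increase along orbits, and the reversed address word of length $m$ of $a\in A$ sends any $x$ to within $L^m(\operatorname{diam}A+d(x,A))$ of $a$. Second, the separate Champernowne case is redundant, since the net-based block already handles every $z>\underline{D}(A)$.
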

\begin{remark}\label{rem: LSS2 MAIN}\emph{
The essential part of the proof is the following fact: for every $z\in (\underline{D}(A), \infty)$, the set 
\begin{equation}\label{eq: LSS2 MAIN}
\mathcal{I}_z:=\{\i\in I^\infty:z\in\lambda(\i)\}
\end{equation}
is residual.\\
Then we use Lemma \ref{lem: spectrum is closed} and the fact that the family $\mathcal{D}$ of disjunctive drivers is residual, to see that $\I=\mathcal{D}\cap\bigcap_{z\in Z}\I_z$ for any countable and dense set $Z\subset (\underline{D}(A), \infty)$.
}\end{remark}
Note that it is well known that the family of disjunctive drivers if residual (see \cite{Cal}), so a nontrivial part in the above result is residuality of the family of drivers so that  $\lambda(\i)=[\underline{D}(A), \infty]$. 

Finally, observe that the interval $[\underline{D}(A), \infty]$ in Theorem \ref{thm: LSS2 MAIN} is the best possible. Indeed, for any $\i\in I^\infty$ and $x_0\in X$, we have
\[
    \underline{\lambda}_{\i,x_0}:=\liminf\limits_{\varepsilon\to\infty} \frac{\ln(n_{\i,x_0}(\varepsilon))}{\ln(\frac{1}{\varepsilon})} \geqslant \underline{D}(A),
\]
since for any $\varepsilon>0$, we simply have
\begin{equation}\label{eq:key}
    n_{\i,x_0}(\varepsilon) + 1\geqslant N(\varepsilon).
\end{equation}

\section{Auxiliary results}
In this section we give some auxiliary results which will be needed in the main part. We  assume that $\F=\{f_i:i\in I\}$, where $I=\{1,...,K\}$, is a Banach contractive IFS on a complete metric space $X$ and $A$ is its attractor, and $L:=\max\{\on{Lip}(f_i):i\in I\}$.\\
The first lemma (from \cite{LSS2}) gives a characterization of lower dimension (cf. Theorem 1.1 in \cite[Chap. V]{Bar}).
\begin{lemma}\label{lem: lower box dimension sequentially}\cite[Lem. 3.2]{LSS2}
    Set $b_m:= ar^m$ for $m\in\N$, where $a>0$ and $r\in(0,1)$. Then for every compact set $K\subseteq X$, there exists an increasing sequence $(m_k)$ of natural numbers such that
    \[
        \underline{D}(K)=\lim\limits_{k\to\infty}\frac{\ln (N(b_{m_k}))}{\ln\left( \frac{1}{b_{m_k}} \right)}.
    \]
\end{lemma}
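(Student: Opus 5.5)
The idea is to compare an arbitrary $\varepsilon$ with the geometric grid $b_m=ar^m$ and use monotonicity of $N$. Set $\underline{D}:=\underline{D}(K)$; note that the definition in \eqref{eq: low box def} should read with $\ln(N(\varepsilon))$ in the numerator, and I use it that way. The inequality
\[
\underline{D}\leqslant \liminf_{m\to\infty}\frac{\ln(N(b_m))}{\ln(1/b_m)}
\]
is immediate, because $(b_m)$ is a particular sequence tending to $0$. So the whole task is the reverse inequality along some subsequence. After that, the required increasing sequence $(m_k)$ is obtained by choosing indices realising this $\liminf$ of the sequence $\frac{\ln(N(b_m))}{\ln(1/b_m)}$.

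To get the reverse inequality, take a sequence $\varepsilon_j\to 0$ with $\frac{\ln(N(\varepsilon_j))}{\ln(1/\varepsilon_j)}\to\underline{D}$. For each $j$ (with $j$ large, so that $\varepsilon_j<a$) let $m_j$ be the unique integer with
\[
b_{m_j+1}<\varepsilon_j\leqslant b_{m_j}.
\]
The map $\varepsilon\mapsto N(\varepsilon)$ is nonincreasing, so $N(b_{m_j})\leqslant N(\varepsilon_j)$. The two scales are also comparable:
\[
\ln(1/b_{m_j})\geqslant \ln(1/\varepsilon_j)-\ln(1/r).
\]
Since $\ln(1/\varepsilon_j)\to\infty$, the additive error $\ln(1/r)$ is negligible. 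Hence
\[
\frac{\ln(N(b_{m_j}))}{\ln(1/b_{m_j})}\leqslant \frac{\ln(N(\varepsilon_j))}{\ln(1/\varepsilon_j)-\ln(1/r)}\to\underline{D}.
\]
This holds provided $N(\varepsilon_j)\geqslant 1$, which is true since $K\neq\emptyset$, so the logarithms are nonnegative.

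Combining this with the trivial inequality, along the indices $m_j$ the ratio tends to $\underline{D}$. Because $m_j\to\infty$, we can pass to a strictly increasing subsequence. This gives the claim.

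The only delicate points are degenerate cases. If $\underline{D}=\infty$, the trivial inequality already forces the limit along every sequence, so any $m_k=k$ works. If $K$ is finite, then $N$ is eventually constant and both sides are $0$. The main (mild) obstacle is the comparability of $\ln(1/b_{m_j})$ and $\ln(1/\varepsilon_j)$ together with the monotonicity of $N$.
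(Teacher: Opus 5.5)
Your argument is correct. The trivial inequality along the grid $(b_m)$, combined with the sandwich $b_{m_j+1}<\varepsilon_j\leqslant b_{m_j}$, the monotonicity of $N$, and the bounded additive error $\ln(1/r)$ in the denominators, gives the limit along a strictly increasing subsequence. You are also right that the numerator in the definition of $\underline{D}$ should read $\ln(N(\varepsilon))$. The paper does not prove this lemma itself but quotes it from \cite[Lem. 3.2]{LSS2}, pointing to Barnsley's box-counting theorem \cite[Chap. V]{Bar}; your grid-comparison argument is the standard one used there.
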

The second lemma (from \cite{LSS1}) gives a way to construct finite drivers allowing good approximations of attractors.
\begin{lemma}\label{lem: finite driver approx}\cite[Lem. 19]{LSS1}\\
    For every $m\in \N$, there exists $\sigma_m\in I^{m\cdot N(C_m)}$ such that for every $x_0\in X$ with $d(x_0, A)\leqslant 1$, we have
    \[
        A \subseteq \bigcup\limits_{k=0}^{m\cdot N(C_m)} B(x_k, 3C_m),
    \]
    where $(x_k)_{k=0}^{m\cdot N(C_m)}$ is the finite orbit driven by $\sigma_m$ and $C_m:=L^m(\on{diam}A+1)$.
\end{lemma}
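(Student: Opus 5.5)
The statement to prove is Lemma~\ref{lem: finite driver approx}: for each $m$ we need a single finite word $\sigma_m$ of length $m\cdot N(C_m)$ whose orbit, from any $x_0$ with $d(x_0,A)\le 1$, meets the $3C_m$-neighbourhood of every point of $A$. The plan is to fix a minimal cover of $A$ by $N(C_m)$ closed $C_m$-balls, attach to each ball a word of length $m$ that sends $A$ (and nearby points) into that ball, and then concatenate these $N:=N(C_m)$ words. The contraction built into $C_m=L^m(\on{diam}A+1)$ is what makes the orbit land in the right ball regardless of where the previous block left it.

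\textbf{Step 1: one word per ball.} Let $B(y_1,C_m),\dots,B(y_N,C_m)$ cover $A$. Enlarging the radius to $2C_m$ if needed, we may assume $y_j\in A$ (pick any point of $A\cap B(y_j,C_m)$). Using $A=\F^{(m)}(A)=\bigcup_{|w|=m} f_w(A)$, write each $y_j=f_{w_j}(a_j)$ with $a_j\in A$ and $|w_j|=m$, where $f_w=f_{w_m}\circ\dots\circ f_{w_1}$ is composed in driver order. Since $\Lip(f_w)\le L^m$, any $x$ with $d(x,A)\le 1$ satisfies
\[
d(f_{w_j}(x),y_j)\le L^m\, d(x,a_j)\le L^m(\on{diam}A+1)=C_m .
\]

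\textbf{Step 2: concatenate and invariance.} Set $\sigma_m:=w_1w_2\cdots w_N\in I^{mN}$. The points at which blocks end are $x_m,x_{2m},\dots,x_{Nm}$. We need every block to start from a point within distance $1$ of $A$. This holds for $x_0$ by hypothesis. For the inductive step, note that if $d(x,A)\le 1$ then $d(f_i(x),A)\le L\,d(x,A)\le 1$ because $f_i(A)\subseteq A$; so the region $\{d(\cdot,A)\le 1\}$ is forward invariant under every $f_i$. Hence $x_{(j-1)m}$ satisfies the hypothesis of Step 1, and $d(x_{jm},y_j)\le C_m$.

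\textbf{Step 3: conclusion.} Every $a\in A$ lies within $2C_m$ of some $y_j$ (with the enlarged radius from Step 1), hence within $3C_m$ of $x_{jm}$. Therefore $A\subseteq\bigcup_{k=0}^{mN} B(x_k,3C_m)$.

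\textbf{Main obstacle.} The delicate point is getting the constants right. The factor $3$ absorbs both the recentering of the cover inside $A$ and the distance from $y_j$ to the orbit point. The "$+1$" in $C_m$ absorbs the initial distance of $x_0$ from $A$, and it stays valid along the whole orbit only because of the forward invariance established in Step 2.
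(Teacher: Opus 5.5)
Your proof is correct and follows essentially the same route as the cited proof of \cite[Lem.~19]{LSS1}. You recentre a minimal $C_m$-cover of $A$ at points of $A$ (which costs the factor $2$), hit each centre with a length-$m$ word, concatenate the words, and use forward invariance of $\{x: d(x,A)\le 1\}$ to pay $L^m(\on{diam}A+1)=C_m$ per block. This is the same bookkeeping visible in the paper's Remark~\ref{rem: new Lemma finite driver approx}, whose radius $2d+L^m(\on{diam}A+d(x_0,A))$ reduces to $3C_m$ when $d=C_m$ and $d(x_0,A)\le 1$.
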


\begin{remark}\label{rem: new Lemma finite driver approx}\emph{
    If we slightly modify the proof of \cite[Lem. 19]{LSS1}, we can obtain a more general statement, namely:}

    For every $d>0$ and every $m\in\N$, there exists $\sigma(d,m)\in I^{m\cdot N(d)}$  such that for every $x_0\in X$, we have
    \[
        A \subseteq \bigcup\limits_{k=0}^{m\cdot N(d)} B(x_k, 2d + L^m(\diam A + d(x_0, A))),
    \]
    where $(x_k)_{k=0}^{m\cdot N(d)}$ is the finite orbit driven by $\sigma(d,m)$.
\end{remark}

We end this section with two technical lemmas useful in later proofs. The first one seems to be folklore but we give its proof for the reader's convenience.

\begin{lemma}\label{lem: convergence of logarithms}
    Assume that $(a_k),(b_k)\subseteq\R$ are such that $a_k,b_k\to\infty$ as $k\to\infty$, $b_k>1$ and $a_k>0$ for all $k\in\N$. Then
    \begin{equation}\label{eq: lemma implication}
        \lim\limits_{k\to \infty} \frac{a_k}{b_k} = 1\quad \implies \quad  \lim\limits_{k\to \infty} \frac{\ln(a_k)}{\ln(b_k)} = 1.
    \end{equation}
\end{lemma}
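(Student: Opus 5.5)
The plan is to write $\ln(a_k)=\ln(b_k)+\ln\left(\frac{a_k}{b_k}\right)$, which is legitimate because $a_k>0$ and $b_k>1$. Dividing by $\ln(b_k)>0$ then gives
\[
\frac{\ln(a_k)}{\ln(b_k)} = 1 + \frac{\ln\left(\frac{a_k}{b_k}\right)}{\ln(b_k)}.
\]
So it suffices to show that the second term tends to $0$.

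For the numerator, the hypothesis $\frac{a_k}{b_k}\to 1$ and continuity of $\ln$ at $1$ give $\ln\left(\frac{a_k}{b_k}\right)\to 0$. In particular this numerator is bounded for large $k$. For the denominator, $b_k\to\infty$ gives $\ln(b_k)\to\infty$. A quotient whose numerator tends to $0$ and whose denominator tends to $+\infty$ tends to $0$, so $\frac{\ln(a_k)}{\ln(b_k)}\to 1$.

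There is no real obstacle here. The only care needed is to make sure every expression is defined. We need $\ln(b_k)\neq 0$, which is why $b_k>1$ is assumed, and $\ln(a_k)$ must make sense, which is why $a_k>0$ is assumed. The hypothesis $a_k\to\infty$ is not actually needed, since it follows from the other two conditions.
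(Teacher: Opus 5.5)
Your proof is correct and is essentially the paper's argument. The paper fixes $\delta$ and sandwiches $\ln(a_k)$ between $\ln(b_k)+\ln(1-\delta)$ and $\ln(b_k)+\ln(1+\delta)$, which is a bounded form of your identity $\ln(a_k)=\ln(b_k)+\ln(a_k/b_k)$, and then uses $\ln(b_k)\to\infty$ exactly as you do. Your remark that the hypothesis $a_k\to\infty$ is redundant is also right.
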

\begin{proof}
    Assume that 
    \[
        \lim\limits_{k\to \infty} \frac{a_k}{b_k} = 1.
    \]
    Fix any $0<\delta<1$. Then there exists $k_0\in \N$ such that for every $k\geqslant k_0$, we have
    \[
        b_k(1-\delta)\leqslant a_k \leqslant b_k(1+\delta).
    \]
    Applying the natural logarithm to the above inequality, we obtain
    \[
        \ln(b_k) + \ln(1-\delta)\leqslant\ln(a_k) \leqslant \ln(b_k) + \ln(1+\delta)
    \]
    \[
        1 + \frac{\ln(1-\delta)}{\ln(b_k)} \leqslant \frac{\ln(a_k)}{\ln(b_k)} \leqslant 1 + \frac{\ln(1+\delta)}{\ln(b_k)}.
    \]
    Now since
    \[
        \lim\limits_{k\to\infty}\frac{\ln(1\pm\delta)}{\ln(b_k)}= 0,
    \]
    we have that
    \[
        \lim\limits_{k\to\infty} \frac{\ln(a_k)}{\ln(b_k)}= 1.
    \]
\end{proof}

\begin{lemma}\label{lem: limsup limlinf equiv}
    Let $\{(a_k^x): x\in C\}$ be a family of real valued sequences indexed by some set $C$, and let $b\in\R$. The following conditions are equivalent
    \begin{enumerate}[(i)]
        \item $\lim\limits_{k\to\infty} \left(\sup\limits_{x\in C} |a_k^x - b|\right) = 0$; 
        \item $\limsup\limits_{k\to\infty} \left(\sup\limits_{x\in C} a_k^x\right)\leqslant b\;\;$ and $\;\; \liminf\limits_{k\to\infty} \left(\inf\limits_{x\in C} a_k^x\right)\geqslant b$
    \end{enumerate}
\end{lemma}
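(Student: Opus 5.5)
The plan is to prove the two implications separately, working only with the quantities $S_k:=\sup_{x\in C} a_k^x$ and $I_k:=\inf_{x\in C} a_k^x$ and the elementary inequalities
\[
    I_k - b \leqslant a_k^x - b \leqslant S_k - b \quad\text{for all } x\in C,
\]
together with
\[
    \sup_{x\in C}|a_k^x-b| = \max\{S_k-b,\; b-I_k\}.
\]
The second identity holds because $|t|=\max\{t,-t\}$ and the supremum of a pointwise maximum is the maximum of the suprema. Under (i) this common value is finite for large $k$, so $S_k$ and $I_k$ are finite there. Under (ii) the values $\pm\infty$ are allowed a priori, and I will handle them by the usual conventions in the extended reals.

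For (i)$\Rightarrow$(ii), let $\delta>0$. By (i) there is $k_0$ such that $\sup_{x\in C}|a_k^x-b|\leqslant\delta$ for all $k\geqslant k_0$. Then $S_k\leqslant b+\delta$ and $I_k\geqslant b-\delta$ for every $k\geqslant k_0$. Hence $\limsup_k S_k\leqslant b+\delta$ and $\liminf_k I_k\geqslant b-\delta$, and letting $\delta\to0$ gives (ii).

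For (ii)$\Rightarrow$(i), let $\delta>0$. From $\limsup_k S_k\leqslant b$ there is $k_1$ with $S_k\leqslant b+\delta$ for $k\geqslant k_1$. From $\liminf_k I_k\geqslant b$ there is $k_2$ with $I_k\geqslant b-\delta$ for $k\geqslant k_2$. For $k\geqslant\max\{k_1,k_2\}$, every $x\in C$ then satisfies $b-\delta\leqslant a_k^x\leqslant b+\delta$, so $\sup_{x\in C}|a_k^x-b|\leqslant\delta$. This is (i).

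There is no real obstacle here. The only point needing care is that $S_k$ or $I_k$ may be infinite for finitely many $k$, or that $C$ might be empty, where the convention $\sup\emptyset=-\infty$ makes both sides hold trivially. The eventual bounds above are exactly what rule out infinite values from some index on, so the argument goes through unchanged.
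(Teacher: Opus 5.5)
Your argument is correct and is essentially the paper's: both directions are the same direct $\varepsilon$-argument. The only difference is that the paper first uses the chains $\liminf_k I_k\leqslant\limsup_k I_k\leqslant\limsup_k S_k$ and $\liminf_k I_k\leqslant\liminf_k S_k\leqslant\limsup_k S_k$ to show $I_k\to b$ and $S_k\to b$ before extracting the uniform bounds, whereas you read those eventual bounds directly off the definitions of $\limsup$ and $\liminf$, which is slightly more economical. One small slip in your closing aside: if $C=\emptyset$ and $\sup\emptyset=-\infty$, then (ii) holds but (i) reads $\lim_k(-\infty)=0$ and fails, so that degenerate case needs either $C\neq\emptyset$ (as in every application in the paper) or the convention that a supremum of nonnegative quantities over $\emptyset$ is $0$.
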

\begin{proof}
    We begin the proof by noting that
    \begin{equation}\label{eq: liminf limsup equation}
        \liminf\limits_{k\to\infty}\left(\inf\limits_{x\in C} a_k^x\right) \leqslant \limsup\limits_{k\to\infty}\left(\inf\limits_{x\in C} a_k^x \right)\leqslant \limsup\limits_{k\to\infty}\left(\sup\limits_{x\in C} a_k^x\right).
    \end{equation}
    and
    \begin{equation}\label{eq: liminf liminf equation}
        \liminf\limits_{k\to\infty}\left(\inf\limits_{x\in C} a_k^x\right) \leqslant \liminf\limits_{k\to\infty}\left(\sup\limits_{x\in C} a_k^x \right)\leqslant \limsup\limits_{k\to\infty}\left(\sup\limits_{x\in C} a_k^x\right),
    \end{equation}
    Now assume $(ii)$. By (\ref{eq: liminf limsup equation}) and (\ref{eq: liminf liminf equation}), we have that
    \[
        \lim\limits_{k\to\infty}\left(\inf\limits_{x\in C} a_k^x\right) = b\ \text{ and }\ \lim\limits_{k\to\infty}\left(\sup\limits_{x\in C} a_k^x\right) = b.
    \]
    Since $\lim\limits_{k\to\infty}\left(\inf\limits_{x\in C} a_k^x \right)= b$, we have
    \begin{equation}\label{eq: all akx bigger}
        \forall_{\varepsilon>0}\ \exists_{k_1\in \N}\ \forall_{k\geqslant k_1}\ \forall_{x\in C}\ b-\varepsilon\leqslant a_k^x.
    \end{equation}
    Since $\lim\limits_{k\to\infty}\left(\sup\limits_{x\in C} a_k^x \right)= b$, we have
    \begin{equation}\label{eq: all akx lower}
        \forall_{\varepsilon>0}\ \exists_{k_2\in \N}\ \forall_{k\geqslant k_2}\ \forall_{x\in C}\ a_k^x\leqslant b+\varepsilon.
    \end{equation}
    Combining (\ref{eq: all akx bigger}) and (\ref{eq: all akx lower}), we get
    \begin{equation}\label{eq: condition i}
        \forall_{\varepsilon>0}\ \exists_{k_0\in \N}\ \forall_{k\geqslant k_0}\ \forall_{x\in C}\ b-\varepsilon\leqslant a_k^x \leqslant b+\varepsilon,
    \end{equation}
    equivalently
    \[
        \lim\limits_{k\to\infty} \left(\sup_{x\in C} |a_k^x - b|\right) = 0.
    \]

    Now assume $(i)$. Then we have condition (\ref{eq: condition i}) and hence for any $\varepsilon>0$ there exists $k_0\in\N$ such that for all $k\geqslant k_0$
    \[
        b-\varepsilon \leqslant \inf\limits_{x\in C} a_k^x \leqslant \sup\limits_{x\in C} a_k^x \leqslant b+\varepsilon.
    \]
    This establishes that
    \[
      \ \lim\limits_{k\to\infty}\left(\sup\limits_{x\in C} a_k^x\right) = b\;\;  \text{ and }\;\;\lim\limits_{k\to\infty}\left(\inf\limits_{x\in C} a_k^x\right) = b,
    \]
    and thus we get $(ii)$.
\end{proof}

\section{Main results}
As earlier, we  assume that $\F=\{f_i:i\in I\}$, where $I=\{1,...,K\}$, is a Banach contractive IFS on a complete metric space $X$ and $A$ is its attractor, and $L:=\max\{\on{Lip}(f_i):i\in I\}$.\\
Again, for any $m\in \N$, define
$$
    C_m:= L^m (\diam A + 1).
$$

\begin{theorem}\label{thm:MAIN}
    Let $\psi:(0,\infty)\longrightarrow(0,\infty)$ be any function such that $\lim\limits_{\varepsilon\to 0}\psi(\varepsilon)=\infty$.\\
If 
    \begin{equation}\label{eq: main assumptions}
        \liminf_{n\to\infty}n\frac{N(C_{n})}{\psi(3C_{n})}=0,
    \end{equation}
    then the set
   \begin{equation}\label{eq:mainset}
        \I_\psi:=\big\{\i\in I^\infty:\ \exists_{(\varepsilon_k)\in c_0^+}\ \forall_{R>0}\ \sup_{d(x_0,A)\leqslant R}\left|\frac{n_{\i,x_0}(\varepsilon_k)}{\psi(\varepsilon_k)}-1\right|\to0\big\}
    \end{equation}
    is residual in $I^\infty$.
\end{theorem}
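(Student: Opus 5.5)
Write $\I_\psi$ as a countable intersection of open dense sets, or at least show it contains one. For $j\in\N$ and $R\in\N$, consider
\[
\mathcal{U}_j:=\Big\{\i\in I^\infty:\ \exists\,\varepsilon\in(0,1/j)\ \ \sup_{d(x_0,A)\leqslant j}\Big|\frac{n_{\i,x_0}(\varepsilon)}{\psi(\varepsilon)}-1\Big|<\frac1j\Big\}.
\]
If $\i\in\bigcap_j\mathcal{U}_j$, choose $\varepsilon_j<1/j$ witnessing membership. Then $(\varepsilon_j)\in c_0^+$, and for every fixed $R$ the supremum over $d(x_0,A)\leqslant R\leqslant j$ is below $1/j$. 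Hence $\bigcap_j\mathcal{U}_j\subseteq\I_\psi$.

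Openness is not automatic, since $n_{\i,x_0}(\varepsilon)$ depends on a finite prefix of $\i$ only once it is finite. So I would work instead with the interior. Let $\mathcal{V}_j$ be the set of drivers having a finite prefix $\tau$ such that every extension of $\tau$ lies in $\mathcal{U}_j$, with the witness $\varepsilon$ depending only on $\tau$. $\mathcal{V}_j$ is open by definition, and $\mathcal{V}_j\subseteq\mathcal{U}_j$. The task is then density of $\mathcal{V}_j$: given any finite word $\omega$, we must extend it to a word $\tau$ that forces the property.

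The construction of $\tau$ follows the proof of \cite[Thm. 1.2]{LSS2}.

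\textbf{Step 1 (reach precision fast).} Apply Remark \ref{rem: new Lemma finite driver approx} after the prefix $\omega$. Points $x_{|\omega|}$ are $f_\omega(x_0)$, so for $d(x_0,A)\leqslant j$ we have $d(x_{|\omega|},A)\leqslant C'$ for some constant $C'$ depending on $j$ and $\omega$. Take the word $\sigma_n=\sigma(C_n,n')$ with $n'$ slightly larger than $n$, so that $L^{n'}(\diam A+C')\leqslant C_n$. Then $\omega\sigma_n$ covers $A$ by balls of radius $3C_n$ after at most $|\omega|+n'N(C_n)$ steps.

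\textbf{Step 2 (pick $n$).} By (\ref{eq: main assumptions}), choose $n$ arbitrarily large with $n'N(C_n)+|\omega|\leqslant \delta\,\psi(3C_n)$, where $\delta$ is small. The growth of $n'$ over $n$ is additive and bounded, so the $\liminf$ condition still applies. Set $\varepsilon:=3C_n<1/j$.

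\textbf{Step 3 (control the hitting time from below).} We want $n_{\i,x_0}(\varepsilon)\approx\psi(\varepsilon)$ rather than merely $\leqslant$. So we do not append $\sigma_n$ right after $\omega$. Instead, insert a padding word first: $\tau=\omega\,1^{p}\,\sigma_n$, where $1^p$ is a constant word. The constant word drives the orbit to the fixed point of $f_1$, and a set of orbit points that is essentially one point (up to tiny error) together with the points from $\omega$ cannot $\varepsilon$-cover $A$ when $A$ is infinite and $\varepsilon$ is small. Hence $n_{\i,x_0}(\varepsilon)>|\omega|+p$ for all relevant $x_0$. More precisely, the points of $\omega$ plus near-fixed points number at most $|\omega|+O(\text{const})$ distinct $\varepsilon$-clusters, and $N(\varepsilon)\to\infty$ exceeds that.

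Meanwhile $n_{\i,x_0}(\varepsilon)\leqslant |\omega|+p+|\sigma_n|$. To cover from the orbit points after the padding, use the $k_0$ freedom in the definition, or note that $f_1$ is nonexpansive. Choosing $p=\lfloor\psi(\varepsilon)\rfloor-|\omega|-|\sigma_n|$, both bounds lie within $[\psi(\varepsilon)(1-2\delta),\psi(\varepsilon)]$ uniformly in $d(x_0,A)\leqslant j$. With $\delta<1/(2j)$ this gives $\tau$ with every extension in $\mathcal{U}_j$, since $n(\varepsilon)$ is already determined by $\tau$.

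The main obstacle is Step 3's lower bound. The question is whether the initial segment including the padding could already $\varepsilon$-cover $A$. I would handle it by bounding the number of $\varepsilon$-separated points in $\{x_0,\dots,x_{|\omega|+p}\}$. After $q$ steps of $f_1$, all points lie within $L^{q}(\cdot)$ of the fixed point, so only $|\omega|+q_0$ points matter, with $q_0$ depending on $j,\varepsilon$ logarithmically. We also need $N(\varepsilon)>|\omega|+q_0+1$, which holds for large $n$ since $A$ is infinite. This implicitly uses infinite $A$, which should also follow from (\ref{eq: main assumptions}) or be a harmless assumption. Residuality of $\I_\psi\supseteq\bigcap_j\mathcal{V}_j$ then follows by Baire.
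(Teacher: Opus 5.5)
Your Baire framework (the sets $\mathcal{U}_j$, the prefix-determined sets $\mathcal{V}_j$, density by extending an arbitrary $\omega$) is fine, and so is your upper bound $n_{\i,x_0}(\varepsilon)\leqslant|\omega|+p+|\sigma_n|$. The gap is the lower bound in Step 3, which fails as stated.

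With an arbitrarily chosen padding symbol, the transient part of the padding orbit can itself cover $A$. Take $f_1(x)=x/2$, $f_2\equiv 1$ on $\R$ (Example \ref{ex:ext}). Then $A=\{0\}\cup\{2^{-n}:n\geqslant 0\}$, $N(C_n)\asymp n$, and (\ref{eq: main assumptions}) holds for $\psi(\varepsilon)=1/\varepsilon$. If $\omega$ ends with $2$, then $x_{|\omega|}=1$ for every $x_0$, and the padding $1^p$ produces $1,\tfrac12,\tfrac14,\dots$. With $\varepsilon=3C_n$ these points already $\varepsilon$-cover $A$ after about $n$ padding steps. So $n_{\i,x_0}(\varepsilon)\leqslant|\omega|+n$ while $\psi(\varepsilon)\asymp 2^n$, and $\tau=\omega 1^p\sigma_n$ does not force $n_{\i,x_0}(\varepsilon)\approx\psi(\varepsilon)$.

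Your counting does not rescue this. Your $q_0$ grows like $\log_{1/L}(1/\varepsilon)$, while infiniteness of $A$ only gives $N(\varepsilon)\to\infty$ with no rate; in this very example $N(\varepsilon)\asymp\log_2(1/\varepsilon)$. So $N(\varepsilon)>|\omega|+q_0+1$ does not follow. Besides, the late padding points only lie in a ball of radius $\varepsilon+\eta$ about the fixed point, which is not one of the $\varepsilon$-balls counted by $N(\varepsilon)$.

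The missing idea is to make the number of transient padding points independent of the scale. You do this by choosing the padding symbol carefully and working outside a \emph{fixed} neighbourhood of its fixed point:

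Choose $i_*$ such that $A$ has an accumulation point different from the fixed point $x_*$ of $f_{i_*}$. This is possible for infinite $A$, since if all $f_i$ shared a fixed point, $A$ would be that singleton. Fix $\delta>0$ with $A\setminus B(x_*,\delta)$ infinite, and fix $m_1$ with $L^{m_1}(j+\diam A)<\delta/2$.

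Since $x_*\in A$ and $d(x_{|\omega|},A)\leqslant j$, after $m_1$ steps of $f_{i_*}$ every orbit point is within $\delta/2$ of $x_*$. Hence for $\varepsilon<\delta/2$ their $\varepsilon$-balls miss $A\setminus B(x_*,\delta)$. It then suffices that $A\setminus B(x_*,\delta)$ cannot be covered by $|\omega|+m_1$ balls of radius $\varepsilon$. This holds for all small $\varepsilon$ because that set is infinite; it is condition (C) in the paper's proof.

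With this change your argument works. It is a legitimate variant of the paper's, which uses predetermined block positions and absorbs the prefix via condition (D) rather than tuning $p$ to $|\omega|$.

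Finally, infiniteness of $A$ does not follow from (\ref{eq: main assumptions}); a finite $A$ with $\psi(\varepsilon)=1/\varepsilon$ satisfies it. It is nonetheless a genuine tacit hypothesis: for a singleton $A$ the set $\I_\psi$ is empty. The paper also uses it implicitly.
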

\begin{remark}\emph{
We can formulate the above assertion in a manner similar to that from Theorem \ref{thm: LSS2 MAIN} (cf. Remark \ref{rem: LSS2 MAIN}). Namely, let $\Phi$ be the family of all maps $\psi:(0,\infty)\to(0,\infty)$ with $\lim\limits_{\varepsilon\to 0}\psi(\varepsilon)=\infty$. Then for every $\i\in I^\infty$, let $\Lambda(\i)$ be the family of all $\psi\in\Phi$ so that (\ref{eq:mainset}) holds. Then Theorem \ref{thm:MAIN} states that for any $\psi\in\Phi$ which satisfies (\ref{eq: main assumptions}), the family 
$$
\{\i\in I^\infty:\psi\in\Lambda(\i)\}
$$
is residual in $I^\infty$.\\
As we remarked, the proof of Theorem \ref{thm:MAIN} will be an adaptation of the proof of Theorem \ref{thm: LSS2 MAIN}.}
\end{remark}
\begin{proof}
    Fix $i_*$ and a corresponding fixed point $x_*$ of $f_{i_*}$ such that $A$ has a different accumulation point than $x_*$. Then there exists $\delta>0$ such that $A\setminus B(x_*,\delta)$ is infinite. For the rest of the proof let $N_\delta(\varepsilon)$ denote the minimum number of $\varepsilon-$balls needed to cover the set $A\setminus B(x_*,\delta)$.

    We will define by induction two increasing sequences $(m_k),(p_k)\subseteq\N$. First, let $(n_l)\subseteq \N$ be such that 
    \begin{enumerate}[(A)]
        \item $\lim\limits_{l\to\infty} n_{l} \frac{N(C_{n_l})}{\psi(3C_{n_l})}=0.$
\end{enumerate}
        Next choose $m_1:=n_{l_1}$ such that
\begin{enumerate}[(B)]
        \item $C_{m_1}<\frac{\delta}{2}.$
\end{enumerate}
Then put $p_1:=\lceil \psi(3C_{m_{1}}) \rceil$. Now, having constructed $m_j=n_{l_j}$ and $p_j$ for $j=1,\dots,k$, we define $m_{k+1}=n_{l_{k+1}}>n_{l_k}$ and $p_{k+1}$ so that
\begin{enumerate}[(A)]
    \setcounter{enumi}{2}
        \item $N_\delta(3 C_{m_{k+1}})>v(k)+m_1$, where $v(k):=\sum\limits_{j=1}^k(p_j + m_j N(C_{m_j}))$;
        \item $N(C_{m_{k+1}})> v(k)$;
        \item $m_{k+1}>m_k + k$;
        \item $p_{k+1}:=\lceil \psi(3C_{m_{k+1}}) \rceil$.
    \end{enumerate}
The construction can be handled as $\lim\limits_{\varepsilon\to 0}N_\delta(\varepsilon)=\lim\limits_{\ve\to 0}N(\ve)=\infty$.\\
    \textbf{Claim} We have
    \[
        \left| p_k - \psi(3 C_{m_k}) \right|\leqslant 1\;\; \text{ and } \;\;\lim\limits_{k\to\infty} \frac{(m_k+1) N(C_{m_k})}{p_k} =0.
    \]
    \textbf{Proof of Claim} The first inequality follows from the definition of $p_k$. For the second one, observe that for sufficiently large $k$, by (A), where $(n_l)$ is replaced by its subsequence $(m_k)$, and by (F), we have
    \begin{align*}
        \frac{(m_k+1) N(C_{m_k})}{p_k}&\leqslant \frac{(m_k+1) N(C_{m_k})}{\psi(3C_{m_k})}= m_k \frac{N(C_{m_k})}{\psi(3C_{m_k})} + \frac{N(C_{m_k})}{\psi(3C_{m_k})}\to 0.
    \end{align*}
    Continuing the proof of Theorem \ref{thm:MAIN}, for $\i\in I^\infty$, put
    \[
        \i[k]:=(i_{v(k-1)+1},\dots, i_{v(k)})\in I^{p_k + m_k N(C_{m_k})},
    \]
    where $v(0):=0$. Also for every $n\in\N$, define
    \[
        U_n := \bigcup\limits_{k=n}^\infty \{ \i\in I^\infty:\ \i[k]=(i_*)_{p_k} \sigma_{m_k} \},
    \]
    where $(i_*)_{p_k} \sigma_{m_k}$ is the concatenation of the constant sequence $(i_*)_{p_k}$ consisting of $i_*$ and of the length $p_k$, with a finite driver $\sigma_{m_k}$ from Lemma \ref{lem: finite driver approx}.

    Following the same argument as in the proof of Theorem \ref{thm: LSS2 MAIN} one can prove that each $U_n$ is open and dense in $I^\infty$. Hence, their intersection $\bigcap_{n\in\N} U_n$ is residual in $I^\infty$.

    To prove that $\I_\psi$ is residual we will show that $\bigcap_{n\in\N} U_n\subseteq \I_\psi$. Let $i\in \bigcap_{n\in\N} U_n$. Then there is an increasing sequence $(k_l)$ of natural numbers such that for every $l\in \N$,
    \begin{equation}\label{eq: segments of i}
        \i[k_l] = (i_*)_{p_{k_l}} \sigma_{m_{k_l}}.
    \end{equation}
    To show that $\i\in\I_\psi$, based on Lemma \ref{lem: limsup limlinf equiv}, it is enough to show that for every $R>0$,
    \begin{equation}\label{eq: limsup}
        \limsup\limits_{l\to\infty}\sup\limits_{d(x_0, A)\leqslant R} \frac{ n_{\i, x_0}(\varepsilon_l)}{\psi(\varepsilon_l)}\leqslant 1    
    \end{equation}
    and
    \begin{equation}\label{eq: liminf}
        \liminf\limits_{l\to\infty}\inf\limits_{d(x_0, A)\leqslant R} \frac{n_{\i, x_0}(\varepsilon_l)}{\psi(\varepsilon_l)}\geqslant 1,
    \end{equation}
where $\varepsilon_l := 3C_{m_{k_l}}$, $l\in\N$ (we use Lemma \ref{lem: limsup limlinf equiv} for $C=\{x_0:d(x_0,A)\leq R\}$ and $a_l^{x_0}=\frac{n_{\i, x_0}(\varepsilon_l)}{\psi(\varepsilon_l)}$).

    For the rest of the proof, fix $R>0$, and $l_0$ so that $L^{v(k_{l_0}-1)}\leqslant R^{-1}$. Observe that for any $l\geqslant l_0$, $x_0\in X$ with $d(x_0, A)\leqslant R$ and any $j\in \N$, we have
    \begin{equation}\label{eq: distance leq 1}
        d(x_{v(k_l-1)+j}, A)\leqslant L^{v(k_l-1)+j}d(x_0, A)\leqslant L^{v(k_{l_0}-1)}d(x_0, A)\leqslant 1.
    \end{equation}
    Hence, taking $j=p_{k_l}$ in (\ref{eq: distance leq 1}), we can apply Lemma \ref{lem: finite driver approx} with $x_{v(k_l-1)+p_{k_l}}$ as $x_0$ and then using (\ref{eq: segments of i}) and (D) we obtain
    \[
        n_{\i, x_0}(3C_{m_{k_l}})\leqslant v(k_l-1) + p_{k_l} + m_{k_l}N(C_{m_{k_l}}) \leqslant N(C_{m_k}) + p_{k_l} + m_{k_l}N(C_{m_{k_l}}).
    \]
    Therefore, by \textbf{Claim} we have
    $$
        \limsup\limits_{l\to\infty}\sup\limits_{d(x_0, A)\leqslant R} \frac{ n_{\i, x_0}(\varepsilon_l)}{\psi(\varepsilon_l)}= \limsup\limits_{l\to\infty}\sup\limits_{d(x_0, A)\leqslant R} \frac{ n_{\i, x_0}(3C_{m_{k_l}})}{\psi(3C_{m_{k_l}})}\leq $$
        $$\leq \limsup\limits_{l\to\infty} \frac{p_{k_l} + (m_{k_l}+1)N(C_{m_{k_l}})}{\psi(3C_{m_{k_l}})}= \lim\limits_{l\to\infty}\left( \frac{p_{k_l}}{\psi(3C_{m_{k_l}})} + \frac{(m_{k_l}+1)N(C_{m_{k_l}})}{\psi(3C_{m_{k_l}})}\right) = 1.
 $$
    This gives (\ref{eq: limsup}).

   Now let $l_1\geqslant l_0$ be such that that $p_{k_l}-m_1>0$ and $3C_{m_{k_l}}<\frac{\delta}{2}$ for $l\geq l_1$. Take any $x_0\in X$ with $d(x_0,A)\leq R$. By (C),  the set $A\setminus B(x_*, \delta)$ is not contained in the union of balls $B(x_i, 3C_{m_{k_l}}),\ i=1,\dots, v(k_l-1)+m_1$. Moreover, for all $j=1,\dots, p_{k_l}$, we have that
    \[
        x_{v(k_l-1)+j} = f_{i_*}^{(j)}(x_{v(k_l-1)}).
    \]
    Hence, by (\ref{eq: distance leq 1}) and by (B), for $j=1,\dots, p_{k_l}-m_1$, choosing $y\in A$ such that $d(x_{v(k_l-1)+j}, y) = d(x_{v(k_l-1)+j}, A)$, we have
    $$
        d(x_{v(k_l-1)+m_1+j}, x_*) = d(f_{i_*}^{(m_1)}(x_{v(k_l-1)+j}), f_{i_*}^{(m_1)}(x_*))
        \leqslant L^{m_1} (d(x_{v(k_l-1)+j}, y) + d(y, x_*))
        \leqslant L^{m_1}(1+ \text{diam} A) = C_{m_1}<\frac{\delta}{2}.
    $$
    Then
    \[
        B(x_{v(k_l-1)+m_1+j}, 3C_{m_{k_l}})\subseteq B\left( x_{v(k_l-1)+m_1+j}, \frac{\delta}{2} \right)\subseteq B(x_*, \delta).
    \]
    In particular, the family of balls $B(x_i, 3C_{m_{k_l}}),\ i=0,\dots, v(k_l-1)+p_{k_l}$ does not cover $A$ and hence
    \[
        n_{\i,x_0}(3C_{m_{k_l}})\geqslant v(k_l-1) + p_{k_l}\geqslant p_{k_l}.
    \]
    Finally, by \textbf{Claim},
    \[
        \liminf\limits_{l\to\infty}\inf\limits_{d(x_0, A)\leqslant R} \frac{ n_{\i, x_0}(\varepsilon_l)}{\psi(\varepsilon_l)} = \liminf\limits_{l\to\infty}\inf\limits_{d(x_0, A)\leqslant R} \frac{ n_{\i, x_0}(3C_{m_{k_l}})}{\psi(3C_{m_{k_l}})}\geqslant \lim\limits_{l\to\infty} \frac{p_{k_l}}{\psi(3C_{m_{k_l}})} = 1,
    \]
    which gives (\ref{eq: liminf}). This completes the proof.
\end{proof}

The following remark gathers some observations on the assumptions of Theorem \ref{thm:MAIN} and details of its proof. 

\begin{remark}
    \phantom{aa}
    \begin{enumerate}
        \item \emph{If $(X,d)$ is doubling, i.e. for every $c\in(0,1)$, there exists $M_c>0$ such that for every $x\in X$ and $R>0$, it is possible to cover $B(x,R)$ with at most $M$ balls of radius $cR$ (Euclidean spaces serve as classical examples), then assumption (\ref{eq: main assumptions}) in Theorem \ref{thm:MAIN} is equivalent to: }
        
            \[
                \liminf_{n\to\infty} n \frac{N(L^{n})}{\psi(3C_{n})}=0.
            \]
           \emph{ Indeed, this follows from the inequality
            \[
                N(\varepsilon)\leqslant N(c\varepsilon)\leqslant M_c\cdot N(\varepsilon),\ \varepsilon>0.
            \]}
        \item \emph{We can formulate an alternative version of Theorem \ref{thm:MAIN}  only taking into account $x_0\in A$:}
        
        Assume that
        \[
            \liminf\limits_{n\to\infty} n \frac{N(C_{n}')}{\psi(3C_{n}')}=0,
        \]
        where $C_m':=\operatorname{diam}A\cdot L^m$ for $m\in \N$. Then the set
        \[
            \I_\psi':=\{\i\in I^\infty:\ \exists_{(\varepsilon_k)\in c_0^+}\ \sup_{x_0\in A}\left|\frac{n_{\i,x_0}(\varepsilon_k)}{\psi(\varepsilon_k)}-1\right|\to0\}
        \]
        is residual in $I^\infty$.

        \emph{The proof of this version is very similar - use a version of Lemma \ref{lem: finite driver approx} presented in Remark \ref{rem: new Lemma finite driver approx} applied to $x_0\in A$ and $d_m=C_m'$.}
        %
    \end{enumerate}
    
\end{remark}

Now we formulate two corollaries of Theorem \ref{thm:MAIN}. Firstly, we show that the set of drivers which yield arbitrarily slow convergence is residual. Next we prove that Theorem \ref{thm: LSS2 MAIN} follows from Theorem \ref{thm:MAIN}.

In the following corollary by $\ln^{(n)}$ we mean the $n$-th composition of the natural logarithm. The domian of $\ln^{(n)}$, $n\geqslant 2$, is the following set $D_n:= \{x\in\R:\ \ln^{(n-1)}(x)>0\}=(\exp^{(n-1)}(0),\infty)$ and we adopt the notation that for $x\notin D_n$ we have $\ln^{(n)}(x)=-\infty$.
\begin{corollary}\label{cor: I infty is residual}
    The set
    \[
        \I_\infty=\big\{\i\in I^\infty:\ \exists_{(\varepsilon_k)\in c_0^+}\ \forall_{n\in\N}\ \forall_{R>0} \ \inf\limits_{d(x_0, A)\leqslant R}\frac{\ln^{(n)}(n_{\i,x_0}(\varepsilon_k))}{\ln\left( \frac{1}{\varepsilon_k} \right)}\to \infty\big\}
    \]
    is residual in $I^\infty$.
\end{corollary}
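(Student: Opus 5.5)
The plan is to deduce the corollary from Theorem~\ref{thm:MAIN} by feeding it a single function $\psi$ that grows faster than every iterated exponential. We will show that $\I_\psi\subseteq\I_\infty$ for this $\psi$. Residuality of $\I_\infty$ then follows from residuality of $\I_\psi$. The point to exploit is that the sequence $(\varepsilon_k)$ in the definition (\ref{eq:mainset}) of $\I_\psi$ does not depend on $R$. So if a single $\psi$ works for every iterate $\ln^{(n)}$ at once, the same $(\varepsilon_k)$ serves all $n$ and all $R$.

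\textbf{Choice of $\psi$.} Let $T(\varepsilon):=\exp^{(\lceil 1/\varepsilon\rceil)}(1/\varepsilon)$, a tower of exponentials whose height tends to infinity as $\varepsilon\to 0$. Separately, we have to force assumption (\ref{eq: main assumptions}), about which $T$ alone says nothing, since $N(C_n)$ may be arbitrary. For this put $h(3C_n):=n^2N(C_n)$ for $n\in\N$ and $h(\varepsilon):=1$ for all other $\varepsilon$. Here we assume $L>0$ so that the $C_n$ are positive; if some $C_n$ coincide, take the maximum. Define $\psi:=\max\{T,h\}$. Then $\psi\colon(0,\infty)\to(0,\infty)$ and $\psi(\varepsilon)\geqslant T(\varepsilon)\to\infty$ as $\varepsilon\to0$. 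Moreover $n N(C_n)/\psi(3C_n)\leqslant 1/n\to0$, so (\ref{eq: main assumptions}) holds, and Theorem~\ref{thm:MAIN} shows that $\I_\psi$ is residual.

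\textbf{Inclusion $\I_\psi\subseteq\I_\infty$.} Take $\i\in\I_\psi$ with its sequence $(\varepsilon_k)\in c_0^+$, and fix $R>0$ and $n\in\N$. By (\ref{eq:mainset}) there is $k_0$ such that for $k\geqslant k_0$ and all $x_0$ with $d(x_0,A)\leqslant R$ we have $n_{\i,x_0}(\varepsilon_k)\geqslant \tfrac12\psi(\varepsilon_k)\geqslant\tfrac12T(\varepsilon_k)$. Since $\ln^{(n)}$ is increasing on its domain (with the $-\infty$ convention outside it), we get
\[
\inf_{d(x_0,A)\leqslant R}\ln^{(n)}\bigl(n_{\i,x_0}(\varepsilon_k)\bigr)\geqslant \ln^{(n)}\bigl(\tfrac12T(\varepsilon_k)\bigr).
\]
It remains to show that $\ln^{(n)}(\tfrac12T(\varepsilon))/\ln(1/\varepsilon)\to\infty$. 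Write $h_\varepsilon:=\lceil1/\varepsilon\rceil$. Then $\ln(\tfrac12 T(\varepsilon))=\exp^{(h_\varepsilon-1)}(1/\varepsilon)-\ln2\geqslant\tfrac12\exp^{(h_\varepsilon-1)}(1/\varepsilon)$ for small $\varepsilon$, because the tower is large. Iterating this bound $n$ times gives $\ln^{(n)}(\tfrac12T(\varepsilon))\geqslant \tfrac12\exp^{(h_\varepsilon-n)}(1/\varepsilon)\geqslant \tfrac12\exp(1/\varepsilon)$ once $h_\varepsilon\geqslant n+1$. Finally, $\tfrac12 e^{1/\varepsilon}/\ln(1/\varepsilon)\to\infty$.

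The only delicate point is this last iteration: we must check that the subtracted $\ln 2$ and the repeated halving stay harmless uniformly in the $n$ steps, which holds because each intermediate quantity is itself a huge tower. We must also check that the quantities involved lie in the domain $D_n$ for large $k$, which is automatic since they tend to infinity. Everything else is a direct application of Theorem~\ref{thm:MAIN}.
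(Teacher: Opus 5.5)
Your proof is correct, and it takes a different route from the paper's.

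\textbf{The paper's route.} It applies Theorem~\ref{thm:MAIN} to the countable family $\psi_n(\ve)=\exp^{(n-1)}(1/\ve)$, $n\geq 2$, checking \eqref{eq: main assumptions} for $\psi_2$ via Lemma~\ref{lem: lower box dimension sequentially}. It then uses Lemma~\ref{lem: convergence of logarithms} repeatedly to show that for $\i\in\I_{\psi_n}$ the ratio $\ln^{(n)}(n_{\i,x_0}(\ve_k))/\ln(1/\ve_k)$ tends to $1$ uniformly on $\{d(x_0,A)\leq R\}$. From this, the $\ln^{(n-1)}$-ratio tends to $\infty$, and the residual sets $\I_{\psi_n}$ are intersected. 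The sequence $(\ve_k)$ obtained this way depends on $n$. So the paper first needs a diagonal argument showing $\I_\infty=\I_\infty'$, the set with the quantifiers $\forall n\,\exists(\ve_k)$ in swapped order.

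\textbf{What your route changes.} Your single tower-type $\psi$ eventually dominates every $\psi_n$, which removes both the countable intersection and the diagonal argument. One application of Theorem~\ref{thm:MAIN} gives a residual set $\I_\psi\subseteq\I_\infty$, and its sequence is automatically uniform in $n$ and $R$. You also only need the one-sided bound $n_{\i,x_0}(\ve_k)\geq\frac12\psi(\ve_k)$, so no two-sided asymptotics of iterated logarithms are required.

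\textbf{What the paper's route gives in addition.} It shows that a typical driver realizes, for every $n\geq 2$, the exact rate $\exp^{(n-1)}(1/\ve)$ along a suitable sequence, in the spirit of the spectrum statement of Theorem~\ref{thm: LSS2 MAIN}. Your route instead makes it transparent that a typical driver beats any single prescribed growth function along some sequence.

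\textbf{A small correction.} The padding by $h$ is harmless but unnecessary, because $N(C_n)$ is not arbitrary. Write $f_w=f_{w_1}\circ\dots\circ f_{w_n}$. Then $A=\bigcup_{w\in I^n}f_w(A)$ and $\diam f_w(A)\leq L^n\diam A<C_n$, so $N(C_n)\leq K^n$. On the other hand, $T(3C_n)\geq\exp\bigl(L^{-n}/(3(\diam A+1))\bigr)$ grows doubly exponentially in $n$. Hence $T$ alone already satisfies \eqref{eq: main assumptions}, and the same bound would also shortcut the paper's use of Lemma~\ref{lem: lower box dimension sequentially}.
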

\begin{proof}
    We begin the proof by observing that $\I_\infty=\I_\infty'$, where
    \begin{equation}\label{eq: I infty alternative form}
        \I_\infty'=\big\{\i\in I^\infty:\ \forall_{n\in\N}\ \exists_{(\varepsilon_k)\in c_0^+}\ \forall_{R>0}\ \inf\limits_{d(x_0, A)\leqslant R}\frac{\ln^{(n)}(n_{\i,x_0}(\varepsilon_k))}{\ln\left( \frac{1}{\varepsilon_k} \right)}\to \infty\big\}.
    \end{equation}
    Clearly $\I_\infty\subseteq \I_\infty '$, so it remains to prove that the reverse inclusion holds. Let $\i\in \I_\infty'$.

    For $n=1$, there exists a sequence $(\varepsilon^1_k)$ such that
    \[
        \inf\limits_{d(x_0, A)\leqslant 1}\frac{\ln(n_{\i,x_0}(\varepsilon^1_k))}{\ln\left( \frac{1}{\varepsilon^1_k} \right)}\to \infty,
    \]
    hence there exists $k_1\in \N$ such that $\varepsilon^1_{k_1}<1$ and
    \[
        \inf\limits_{d(x_0, A)\leqslant 1}\frac{\ln(n_{\i,x_0}(\varepsilon^1_{k_1}))}{\ln\left( \frac{1}{\varepsilon^1_{k_1}} \right)}\geqslant 1.
    \]
    Next, for $n=2$ we can find $k_2$ and $\varepsilon^2_{k_2}< \tfrac{1}{2}\varepsilon^1_{k_1}$ such that
    \[
        \inf\limits_{d(x_0, A)\leqslant 2}\frac{\ln^{(2)}(n_{\i,x_0}(\varepsilon^2_{k_2}))}{\ln\left( \frac{1}{\varepsilon^2_{k_2}} \right)}\geqslant 2.
    \]
    Continuing this process, we can find a sequence $(\varepsilon^n_{k_n})\in c_0^+$ with $\varepsilon^n_{k_n}<\frac{1}{n} \varepsilon^{n-1}_{k_{n-1}}$, where $\varepsilon^0_{k_0}:=1$, such that
    \[
        \inf\limits_{d(x_0, A)\leqslant n}\frac{\ln^{(n)}(n_{\i,x_0}(\varepsilon^n_{k_n}))}{\ln\left( \frac{1}{\varepsilon^n_{k_n}} \right)} \geqslant n.
    \]
    Define $\varepsilon_m:=\varepsilon^m_{k_m}$. We will check that for any $n_0\in\N$ and any $R>0$, we have
    \[
        \inf\limits_{d(x_0, A)\leqslant R}\frac{\ln^{(n_0)}(n_{\i,x_0}(\varepsilon_m))}{\ln\left( \frac{1}{\varepsilon_m} \right)}\to \infty.
    \]
    Fix $n_0\in\N$ and $R>0$. Then, if $m>n_0$ and $m>R$, we have
    \[
        \inf\limits_{d(x_0, A)\leqslant R}\frac{\ln^{(n_0)}(n_{\i,x_0}(\varepsilon_m))}{\ln\left( \frac{1}{\varepsilon_m} \right)} \geqslant
        \inf\limits_{d(x_0, A)\leqslant m}\frac{\ln^{(m)}(n_{\i,x_0}(\varepsilon_m))}{\ln\left( \frac{1}{\varepsilon_m} \right)} \geqslant m \to \infty.
    \]
    Hence $\i\in\I_\infty$.

    Now we continue the proof of Corollary \ref{cor: I infty is residual}. For all $n\in\N$, define $\psi_n:(0,\infty)\longrightarrow(0,\infty)$ with a formula
    \[
        \psi_n(\varepsilon)= \exp^{(n-1)}\left( \frac{1}{\varepsilon} \right),
    \]
    where $\exp^0=\text{id}$. Clearly $\psi_n(t)<\psi_m(t)$ if $n<m$ and $t>0$. Hence if the assumptions of Theorem \ref{thm:MAIN} are satisfied for some $\psi_{n_0}$, then they are satisfied for any $\psi_n,\ n\geqslant n_0$. We will now prove that there exists an increasing sequence $(n_l)\subseteq \N$ such that
    \begin{equation*}\label{eq: thm MAIN assumptions}
        \lim\limits_{l\to\infty} n_l \frac{N(C_{n_l})}{\psi_2(3C_{n_l})} = 0.
    \end{equation*}
    By Lemma \ref{lem: lower box dimension sequentially}, we know that there exists an increasing sequence $(n_l)\subseteq\N$ such that
    \begin{equation*}\label{eq: lower box dim}
        z_0=\lim\limits_{l\to\infty} \frac{\ln(N(C_{n_l}))}{\ln\left( \frac{1}{C_{n_l}} \right)},
    \end{equation*}
    where $z_0:= \underline{D}(A)$. Hence, we can find $l_0\in\N$ such that for any $l\geqslant l_0$ we have
    \begin{equation}\label{eq: N Cm bound}
        N(C_{n_l})\leqslant \left( \frac{1}{C_{n_l}} \right)^{z_0+1}.
    \end{equation}
    Next find $l_1\in\N$ such that for any $l\geqslant l_1$, it holds
    \begin{equation}\label{eq: nl bound}
        n_l\leqslant \left( \frac{1}{L^{n_l}} \right). 
    \end{equation}
    Let $B:=\diam A +1$. Then for $l\geqslant\max\{l_0,l_1\}$, by (\ref{eq: N Cm bound}) and (\ref{eq: nl bound}), we have
    \begin{align*}
        n_l \frac{N(C_{n_l})}{\psi_2(3C_{n_l})} \leqslant \frac{\left( \frac{1}{L^{n_l}} \right)\cdot \left( \frac{1}{L^{n_l} B} \right)^{z_0+1}}{(\exp\big( \frac{1}{3B} \big))^{\left( \frac{1}{L^{n_l}} \right)}}\leqslant \left( \frac{1}{B} \right)^{z_0 + 1} \frac{\left(\frac{1}{L^{n_l}}\right)^{z_0+2}}{(\exp\big( \frac{1}{3B} \big))^{\left( \frac{1}{L^{n_l}} \right)}}\to 0,
    \end{align*}
    since we have that $\lim\limits_{x\to\infty}\frac{x^b}{a^x}=0$ for any $a>1,b>0$.

    To prove the result we will show that
    \[
        \bigcap\limits_{n\geqslant 2} \I_{\psi_n} \subseteq \I_\infty' =\I_\infty.
    \]
    Since by Theorem \ref{thm:MAIN} all $\I_{\psi_n}$ for $n\geqslant 2$ are residual, the set $\I_\infty$ is also residual.

    Let $\i\in \bigcap_{n\in\N} \I_{\psi_n}$ and fix $n\geqslant 2$. Since $\i\in \I_{\psi_n}$, there exists $(\varepsilon_k)$ such that for every $R>0$,
    \begin{equation}\label{eq: i in I psi n}
        \sup_{d(x_0,A)\leqslant R}\left|\frac{n_{\i,x_0}(\varepsilon_k)}{\psi_n(\varepsilon_k)}-1\right|\to0.
    \end{equation}
     Fix $R>0$ and fix any $0<\delta<1$. Becasue $\psi_n(\varepsilon_k)\to\infty$, as $k\to\infty$, there exists $k_1\in\N$ such that for all $k\geqslant k_1$, $\psi_n(\varepsilon_k)(1-\delta)>1$. Next, we can find $k_2\in\N,\ k_2\geqslant k_1$ such that for any $k\geqslant k_2$ we have $\ln(\psi_n(\varepsilon_k)(1-\delta))>1$. Continuing the process we can find $k_n\in \N$ such that for all $k\geqslant k_n$
    \begin{equation}\label{eq: ln well defined}
        \ln^{(i)}(\psi_n(\varepsilon_k)(1-\delta))>1 \text{ for }i=0,1,\dots,n-1.
    \end{equation}
    Now, using (\ref{eq: i in I psi n}), we find $k_0\geqslant k_n$ such that for all $k\geqslant k_0$ and for all $x_0\in X$ with $d(x_0, A)\leqslant R$, we have
    \[
        1-\delta \leqslant \frac{n_{\i, x_0}(\varepsilon_k)}{\psi_n(\varepsilon_k)}\leqslant 1 + \delta,
    \]
    and in consequence
    \begin{equation}\label{eq: limit definition}
        \psi_n(\varepsilon_k)(1-\delta) \leqslant n_{\i, x_0}(\varepsilon_k)\leqslant \psi_n(\varepsilon_k)(1 + \delta).
    \end{equation}
    Now we apply the natural logarithm to (\ref{eq: limit definition}) $n$ times using (\ref{eq: ln well defined}), and we get that for $k\geqslant k_0$,
    \[
        \ln^{(n)}(\psi_n(\varepsilon_k)(1-\delta)) \leqslant \inf\limits_{d(x_0, A)\leqslant R}\ln^{(n)}(n_{\i, x_0}(\varepsilon_k)) \leqslant \sup\limits_{d(x_0, A)\leqslant R}\ln^{(n)}(n_{\i, x_0}(\varepsilon_k)) \leqslant \ln^{(n)}(\psi_n(\varepsilon_k)(1 + \delta)).
    \]
    \[
        \ln^{(n-1)}(\ln(\psi_n(\varepsilon_k)) + \ln(1-\delta)) \leqslant \inf\limits_{d(x_0, A)\leqslant R}\ln^{(n)}(n_{\i, x_0}(\varepsilon_k))\leqslant \sup\limits_{d(x_0, A)\leqslant R}\ln^{(n)}(n_{\i, x_0}(\varepsilon_k)) \leqslant \ln^{(n-1)}(\ln(\psi_n(\varepsilon_k)) + \ln(1+\delta))
    \]
    \[
        \frac{\ln^{(n-1)}(\ln(\psi_n(\varepsilon_k)) + \ln(1-\delta))}{\ln^{(n)}(\psi_n(\varepsilon_k))}\leqslant \inf\limits_{d(x_0, A)\leqslant R}\frac{\ln^{(n)}(n_{\i, x_0}(\varepsilon_k))}{\ln\left( \frac{1}{\varepsilon_k} \right)} \leqslant \sup\limits_{d(x_0, A)\leqslant R}\frac{\ln^{(n)}(n_{\i, x_0}(\varepsilon_k))}{\ln\left( \frac{1}{\varepsilon_k} \right)} \leqslant \frac{\ln^{(n-1)}(\ln(\psi_n(\varepsilon_k)) + \ln(1+\delta))}{\ln^{(n)}(\psi_n(\varepsilon_k))}
    \]
    \begin{equation*}\label{eq: logarithmic convergence}
        \frac{\ln^{(n-1)}(\ln(\psi_n(\varepsilon_k)) + \ln(1-\delta))}{\ln^{(n-1)}(\ln(\psi_n(\varepsilon_k)))}\leqslant \inf\limits_{d(x_0, A)\leqslant R}\frac{\ln^{(n)}(n_{\i, x_0}(\varepsilon_k))}{\ln\left( \frac{1}{\varepsilon_k} \right)} \leqslant \sup\limits_{d(x_0, A)\leqslant R}\frac{\ln^{(n)}(n_{\i, x_0}(\varepsilon_k))}{\ln\left( \frac{1}{\varepsilon_k} \right)} \leqslant \frac{\ln^{(n-1)}(\ln(\psi_n(\varepsilon_k)) + \ln(1+\delta))}{\ln^{(n-1)}(\ln(\psi_n(\varepsilon_k)))}.
    \end{equation*}
    Now observe that
    \[
        \frac{\ln(\psi_n(\varepsilon_k)) + \ln(1\pm \delta)}{\ln(\psi_n(\varepsilon_k))} = 1 + \frac{\ln(1\pm \delta)}{\ln(\psi_n(\varepsilon_k))} \to 1\text{, with }k\to\infty,
    \]
    hence applying Lemma \ref{lem: convergence of logarithms} $n-1$ times, we get that
    \[
        \limsup\limits_{k\to\infty} \bigg(\sup\limits_{d(x_0, A)\leqslant R} \frac{\ln^{(n)}(n_{\i, x_0}(\varepsilon_k))}{\ln\left( \frac{1}{\varepsilon_k} \right)}\bigg)\leqslant 1,
    \]
    and
    \[
        \liminf\limits_{k\to\infty} \bigg(\inf\limits_{d(x_0, A)\leqslant R} \frac{\ln^{(n)}(n_{\i, x_0}(\varepsilon_k))}{\ln\left( \frac{1}{\varepsilon_k} \right)}\bigg)\geqslant 1.
    \]
    By Lemma \ref{lem: limsup limlinf equiv}, we obtained
    \[
        \lim\limits_{k\to\infty} \sup_{d(x_0, A)\leqslant R} \left|\frac{\ln^{(n)}(n_{\i, x_0}(\varepsilon_k))}{\ln\left( \frac{1}{\varepsilon_k} \right)} - 1\right| = 0,
    \]
    which gives
    \begin{equation}\label{eq: logarithmic condition}
        \forall_{0<\eta<1}\ \exists_{k_0\in\N}\ \forall_{k\geqslant k_0}\ \forall_{d(x_0, A)\leqslant R} \left| \frac{\ln^{(n)}(n_{\i, x_0}(\varepsilon_k))}{\ln\left( \frac{1}{\varepsilon_k} \right)}  -1\right| \leqslant \eta.
    \end{equation}
    Now fix $M>1$. First choose $k_0$ according to (\ref{eq: logarithmic condition}) for $\eta=\frac{M-1}{M}$.
    
    Next, find $t_0\in\N$ such that for all $t\geqslant t_0$, we have
    \[
        \frac{\ln^{(n-1)}(t)}{\ln(\ln^{(n-1)}(t))}\geqslant M^2.
    \]
    Since $N(\varepsilon_k)\to\infty$ with $k\to\infty$, we can find $k_1\in\N$ such that $N(\varepsilon_k)\geqslant t_0+1$ for $k\geqslant k_1$. Now for any $k\geqslant \max\{k_0,k_1\}$ and $x_0\in X$ with $d(x_0, A)\leqslant R$, it holds
    \[
        n_{\i, x_0}(\varepsilon_k)+1 \geqslant N(\varepsilon_k) \geqslant t_0+1,
    \]
    thus
    \[
        \frac{\ln^{(n-1)}(n_{\i, x_0}(\varepsilon_k))}{\ln(\ln^{(n-1)}(n_{\i, x_0}(\varepsilon_k)))}\geqslant M^2.
    \]
    and in consequence
    $$
        \frac{\ln^{(n-1)}(n_{\i, x_0}(\varepsilon_k))}{\ln\left( \frac{1}{\varepsilon_k} \right)} = \frac{\ln^{(n-1)}(n_{\i, x_0}(\varepsilon_k))}{\ln^{(n)}(n_{\i, x_0}(\varepsilon_k))} \cdot \frac{\ln^{(n)}(n_{\i, x_0}(\varepsilon_k))}{\ln\left( \frac{1}{\varepsilon_k} \right)}\\
        \geqslant M^2(1-\delta)=M^2 \cdot \frac{1}{M} = M.
    $$
    In conclusion we obtained the condition
    \[
    \forall_{M>1}\ \exists_{k_0\in\N}\ \forall_{k\geqslant k_0}\ \forall_{d(x_0,A)\leqslant R}\ \frac{\ln^{(n-1)}(n_{\i, x_0}(\varepsilon_k))}{\ln\left( \frac{1}{\varepsilon_k} \right)} \geqslant M.
    \]
    Therefore
    \[
        \inf\limits_{d(x_0, A)\leqslant R}\frac{\ln^{(n-1)}(n_{\i,x_0}(\varepsilon_k))}{\ln\left( \frac{1}{\varepsilon_k} \right)}\to \infty
    \]
    Since $n\geqslant 2$ was arbitrary we complete the proof.
\end{proof}

Now we prove that the main result from \cite{LSS2} can be inferred from Theorem \ref{thm:MAIN}.

\begin{corollary}\label{cor: MAIN implies LSS2}
    Theorem \ref{thm: LSS2 MAIN} follows from Theorem \ref{thm:MAIN}
\end{corollary}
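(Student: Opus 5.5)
By Remark \ref{rem: LSS2 MAIN}, it suffices to prove one thing. For every $z\in(\underline{D}(A),\infty)$ the set $\I_z=\{\i:z\in\lambda(\i)\}$ is residual. Given this, Lemma \ref{lem: spectrum is closed} and the residuality of disjunctive drivers yield Theorem \ref{thm: LSS2 MAIN} exactly as described there: take the intersection over a countable dense $Z\subset(\underline{D}(A),\infty)$ and use closedness of $\lambda(\i)$. The endpoint $\infty$ needs separate care. It is covered by Corollary \ref{cor: I infty is residual} with $n=1$, since $\I_\infty\subseteq\{\i:\infty\in\lambda(\i)\}$. The endpoint $\underline{D}(A)$ is obtained by closedness, as a limit of points of $Z$.

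Fix $z\in(\underline{D}(A),\infty)$ and put $\psi_z(\ve):=(1/\ve)^z$, so that $\psi_z(\ve)\to\infty$ as $\ve\to0$. The plan is to show that $\psi_z$ satisfies (\ref{eq: main assumptions}) and then that $\I_{\psi_z}\subseteq\I_z$. The inclusion gives residuality through Theorem \ref{thm:MAIN}.

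\textbf{Verifying (\ref{eq: main assumptions}).} By Lemma \ref{lem: lower box dimension sequentially} with $b_m=C_m=L^m(\diam A+1)$, there is a sequence $(n_l)$ with $\ln N(C_{n_l})/\ln(1/C_{n_l})\to\underline{D}(A)$. Choose $\eta>0$ with $\underline{D}(A)+\eta<z$. For large $l$ we then have $N(C_{n_l})\le C_{n_l}^{-(\underline{D}(A)+\eta)}$, while $\psi_z(3C_{n_l})=3^{-z}C_{n_l}^{-z}$. Therefore
\[
n_l\frac{N(C_{n_l})}{\psi_z(3C_{n_l})}\le 3^z\, n_l\, C_{n_l}^{\,z-\underline{D}(A)-\eta}.
\]
The right-hand side tends to $0$, because $C_{n_l}$ decays geometrically in $n_l$ and this beats the linear factor $n_l$.

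\textbf{Inclusion $\I_{\psi_z}\subseteq\I_z$.} Take $\i\in\I_{\psi_z}$ with its sequence $(\ve_k)$ from (\ref{eq:mainset}). For each $R>0$, the quotient $n_{\i,x_0}(\ve_k)/\psi_z(\ve_k)$ lies in $[1-\delta,1+\delta]$ uniformly over $d(x_0,A)\le R$ once $k$ is large. Taking logarithms and dividing by $\ln(1/\ve_k)\to\infty$ gives
\[
\frac{\ln n_{\i,x_0}(\ve_k)}{\ln(1/\ve_k)}=z+\frac{\ln(1\pm\delta)}{\ln(1/\ve_k)}+o(1),
\]
uniformly in $x_0$. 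This is essentially Lemma \ref{lem: convergence of logarithms} in a uniform form, and Lemma \ref{lem: limsup limlinf equiv} packages it into the definition of $z\in\lambda(\i)$. The only point to check is that $n_{\i,x_0}(\ve_k)>0$, which holds for large $k$.

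\textbf{Expected obstacle.} The main difficulty is the bookkeeping at the endpoints and in the countable intersection, namely making sure $\lambda(\i)=[\underline{D}(A),\infty]$ and not merely containment. The inclusion $\lambda(\i)\subseteq[\underline{D}(A),\infty]$ follows from (\ref{eq:key}), which gives $n_{\i,x_0}(\ve)+1\ge N(\ve)$. Containment of $[\underline{D}(A),\infty]$ follows from density of $Z$ together with closedness of $\lambda(\i)$ in $[0,\infty]$. The value $\infty$ is also reached by closedness if $\lambda(\i)$ is closed in the extended sense; otherwise we intersect additionally with $\I_\infty$.
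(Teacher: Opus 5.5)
Your proposal is correct and follows essentially the same route as the paper. You reduce via Remark \ref{rem: LSS2 MAIN} to residuality of $\I_z$, verify (\ref{eq: main assumptions}) for $\psi_z(\ve)=(1/\ve)^z$ through Lemma \ref{lem: lower box dimension sequentially} (your exponent $\underline{D}(A)+\eta$ plays the role of the paper's $\frac{z+z_0}{2}$), and take logarithms to get $\I_{\psi_z}\subseteq\I_z$ via Lemma \ref{lem: limsup limlinf equiv}. Your extra endpoint bookkeeping is consistent with the paper and slightly more explicit, since the paper simply defers these points to Remark \ref{rem: LSS2 MAIN}: you use Corollary \ref{cor: I infty is residual} with $n=1$ for $\infty$, and (\ref{eq:key}) for the reverse inclusion $\lambda(\i)\subseteq[\underline{D}(A),\infty]$.
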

\begin{proof}
    According to Remark \ref{rem: LSS2 MAIN}, we only have to prove that for any $z\in(\underline{D}(A),\infty)$, the set
    \[
        \I_z:=\{ \i\in I^\infty:\ \exists_{(\varepsilon_k)\in c_0^+}\ \forall_{R>0}\ \sup_{d(x_0,A)\leqslant R}\bigg|\frac{\ln(n_{\i,x_0}(\varepsilon_k))}{\ln\left( \frac{1}{\varepsilon_k} \right)}-z\bigg|\to0 \}.
    \]
is residual.\\
Fix $z\in (\underline{D}(A),\infty)$.
We will show that $\I_{\psi_z}\subseteq \I_z$, where $\psi_z$ is given by the formula
    \[
        \psi_z(\varepsilon)=\left( \frac{1}{\varepsilon} \right)^z,\text{ for }\varepsilon\in(0,\infty),
    \]
and that $\I_{\psi_z}$ is residual.\\
    First, observe that 
    \[
        \liminf_{n\to\infty} n \frac{N(C_{n})}{\psi_z(3C_{n})} = 0,
    \]
    and hence $\I_{\psi_z}$ is residual by Theorem \ref{thm:MAIN}.

    Indeed, by Lemma \ref{lem: lower box dimension sequentially}, there exists an increasing sequence $(n_l)\subseteq\N$ for which
    \[
        z_0=\lim\limits_{l\to\infty} \frac{\ln(N(C_{n_l}))}{\ln\left( \frac{1}{C_{n_l}} \right)},
    \]
    hence there exists $l_0\in\N$ such that for any $l\geqslant l_0$, we have
    \[
        N(C_{n_l})\leqslant \left( \frac{1}{C_{n_l}} \right)^{\frac{z+z_0}{2}}.
    \]
    Let $B:=\diam A+1$. Then, for any $l\geqslant l_0$, we get that
    \[
        n_l \frac{N(C_{n_l})}{\psi_z(3C_{n_l})}\leqslant n_l \frac{\left( \frac{1}{C_{n_l}} \right)^{\frac{z+z_0}{2}}}{\left( \frac{1}{3} \right)^z\left(\frac{1}{C_{n_l}}  \right)^z} = 3^zn_l C_{n_l}^{\frac{z-z_0}{2}} = 3^zB^{\frac{z-z_0}{2}}n_lL^{n_l\left(\frac{z-z_0}{2}\right)}\to0
    \]
    
    Continuing the proof of Corollary \ref{cor: MAIN implies LSS2}, let $\i\in \I_{\psi_z}$. By definition of $\I_{\psi_z}$ we have a sequence $(\varepsilon_k)$ such that for every $R>0$,
    \begin{equation}\label{eq: def of Iz}
        \sup_{d(x_0,A)\leqslant R}\left|\frac{n_{\i,x_0}(\varepsilon_k)}{\psi_z(\varepsilon_k)}-1\right|\to0.
    \end{equation}
    Without loss of generality, we can assume that $\ve_k<1$ for all $k\in\N$.
    Fix $R>0$ and fix any $0<\delta<1$. Then using (\ref{eq: def of Iz}), there exists $k_0\in\N$ such that for every $k\geqslant k_0$ and $x_0\in X$ with $d(x_0, A)\leqslant R$, we have
    \[
        \psi_z(\varepsilon_k)(1-\delta)\leqslant n_{\i, x_0}(\varepsilon_k) \leqslant \psi_z(\varepsilon_k)(1+\delta).
    \]
    Applying the natural logarithm to the above inequality, we get
    \[
        z\ln\left( \frac{1}{\varepsilon_k} \right) + \ln(1-\delta) \leqslant \inf\limits_{d(x_0,A)\leqslant R}\ln(n_{\i, x_0}(\varepsilon_k))\leqslant \sup\limits_{d(x_0,A)\leqslant R}\ln(n_{\i, x_0}(\varepsilon_k)) \leqslant z\ln\left( \frac{1}{\varepsilon_k} \right) + \ln(1+\delta)
    \]
    and hence for $k\geq k_0$,
    \begin{equation}\label{eq: z logarithms}
        z + \frac{\ln(1-\delta)}{\ln\left( \frac{1}{\varepsilon_k} \right)} \leqslant \inf\limits_{d(x_0,A)\leqslant R}\frac{\ln(n_{\i, x_0}(\varepsilon_k))}{\ln\left( \frac{1}{\varepsilon_k} \right)}\leqslant \sup\limits_{d(x_0,A)\leqslant R}\frac{\ln(n_{\i, x_0}(\varepsilon_k))}{\ln\left( \frac{1}{\varepsilon_k} \right)} \leqslant z + \frac{\ln(1+\delta)}{\ln\left( \frac{1}{\varepsilon_k} \right)}.
    \end{equation}
    Since
    \[
        \lim\limits_{k\to\infty}\frac{\ln(1\pm\delta)}{\ln\left( \frac{1}{\varepsilon_k} \right)}= 0,
    \]
    we get that
    \[
        \limsup\limits_{k\to\infty} \left(\sup\limits_{d(x_0,A)\leqslant R} \frac{\ln(n_{\i, x_0}(\varepsilon_k))}{\ln\left( \frac{1}{\varepsilon_k} \right)}\right)\leqslant z,
    \]
    and
    \[
        \liminf\limits_{k\to\infty} \left(\inf\limits_{d(x_0,A)\leqslant R} \frac{\ln(n_{\i, x_0}(\varepsilon_k))}{\ln\left( \frac{1}{\varepsilon_k} \right)}\right)\geqslant z.
    \]
    Hence, by Lemma \ref{lem: limsup limlinf equiv}, we obtain that
    \[
        \sup_{d(x_0,A)\leqslant R}\left|\frac{\ln(n_{\i,x_0}(\varepsilon_k))}{\ln\left( \frac{1}{\varepsilon_k} \right)}-z\right|\to0.
    \]
\end{proof}
Now, let us look at an example showing that $\I_z$ and $\I_{\psi_z}$, defined as in the proof of Corollary \ref{cor: MAIN implies LSS2}, are, in general, not the same sets.

\begin{example}\label{ex:ext}
    Let $X:=\R$ and let $\F:=\{f_1,f_2\}$, where
    \[
        f_1(x):=\frac{1}{2} x,\quad f_2(x):=1,\text{ for }x\in\R.
    \]
    Then,
    \[
        A=\{0\}\cup \{(\tfrac{1}{2^n}):\ n\in\N\cup \{0\}\}
    \]
    is the attractor of $\F$, and $\underline{D}(A)=0$.

    We will show that for any $z>0=\underline{D}(A)$, there exists a sequence $\i\in\{1,2\}^\infty$ such that $\i\in \I_z$ but $\i\notin\I_{\psi_z}$ for $\psi_z(\varepsilon)=\left( \frac{1}{\varepsilon}\right)^z$.

    Fix $z>0$ and choose $k_1\in\N$ such that $k<2^{kz}$ for all $k\geqslant k_1$. Then choose $k_2\in\N$ such that $(k+1)2^{kz}>\frac{1}{2^z-1}$ for all $k\geqslant k_2$. Then, for any $k\geqslant k_0:=\max\{k_1,k_2\}$, we have
    \[
        \lfloor k\cdot 2^{kz} \rfloor + k\leqslant k\cdot 2^{kz} + k = (k+1) 2^{kz} <(k+1)2^{(k+1)z}-1<\lfloor (k+1)\cdot 2^{(k+1)z} \rfloor.
    \]
    Now define $\i$ in the following way: let $i_n:=1$, for $n=\lfloor k\cdot 2^{kz} \rfloor,\dots, \lfloor k\cdot 2^{kz} \rfloor + k -1$, where $ k\geqslant 2k_0$, and $i_n:= 2$ for other $n$s.

    Define $\varepsilon_k:=(\tfrac{1}{2})^k$, for $k\in\N$. Then $\ln(\tfrac{1}{\varepsilon_k})=k\ln(2)$. We continue the proof by providing a detailed analysis for $n_{\i,x_0}(\ve_k)$ for arbitrary $x_0\in X$. First, observe that the first block of $k\geqslant 2k_0$ symbols 1 starts in $\i$ at the position $n=\lfloor k\cdot 2^{kz} \rfloor$, so for any $k\geqslant k_0$ and every $x_0\in X$ we have
    \[
        n_{\i, x_0}(\varepsilon_k) \leqslant \lfloor k\cdot 2^{kz} \rfloor + k -1.
    \]
    Now, consider the following cases: $x_0\leqslant0$; $x_0\in(0,1)$; $x_0\geqslant1$.

    Let $x_0\leqslant 0$. Since $i_1=2$ we clearly have that 
    \[
        n_{\i, x_0}(\ve_k)\geqslant n_{\i, 0}(\ve_k) = \lfloor (k-1)\cdot 2^{(k-1)z} \rfloor + k -2.
    \]
    The equality follows from the fact that the closed ball $B(0, \ve_k)$ covers the set $\{0\}\cup \{\tfrac{1}{2^n}:\ n\geqslant k\}$.

    When $x_0\in(0,1)$ find $l\in \N$ such that $\tfrac{1}{2^l}\leqslant x_0 < \tfrac{1}{2^{l-1}}$. Now, for $k\geqslant 2k_0$ define $r(l,k)$ by the formula
    \[
        r(l, k)=\left\{ \begin{aligned}
            k-1,&\ l>k\\
            k-2,&\ l=k\\
            k,&\ l<k
        \end{aligned}\right.,
    \]
    then we have the following
    \[
        n_{\i, x_0}(\ve_k)\geqslant\lfloor r(l,k)\cdot 2^{r(l,k)z}\rfloor + r(l,k)-1.
    \]
    Clearly, if $x_0\geqslant 1$ we have that for any $k\geqslant 2k_0$ the following holds $n_{\i, x_0}(\varepsilon_k) = \lfloor k\cdot 2^{kz} \rfloor + k -1$.
    Using the above calculations we get that for sufficiently large $k\geqslant 2k_0$, we have
    \[
        \sup\limits_{x_0\in X} \frac{\ln(n_{\i, x_0}(\ve_k))}{\ln(\tfrac{1}{\varepsilon_k})} = \frac{\ln(\lfloor k\cdot 2^{kz} \rfloor + k -1)}{\ln(\tfrac{1}{\varepsilon_k})}\leqslant \frac{\ln(k\cdot 2^{kz}+ k -1)}{k\ln(2)} \leqslant \frac{\ln(k+1)}{k\ln(2)} + \frac{kz\ln(2)}{k\ln(2)}\leqslant z+ \frac{\ln(k+1)}{k\ln(2)},
    \]
    and
    \[
        \inf\limits_{x_0\in X} \frac{\ln(n_{\i, x_0}(\ve_k))}{\ln(\tfrac{1}{\varepsilon_k})} = \frac{\ln(\lfloor (k-2)\cdot 2^{(k-2)z} \rfloor + k -3)}{\ln(\tfrac{1}{\varepsilon_k})} \geqslant \frac{\ln( (k-2)\cdot 2^{(k-2)z})}{\ln(\tfrac{1}{\varepsilon_k})} \geqslant \frac{\ln(k-2)}{k\ln(2)} + \frac{(k-2)z\ln(2)}{k\ln(2)}.
    \]
    Since $\lim\limits_{k\to\infty}\frac{\ln(k-2)}{k}=\lim\limits_{k\to\infty}\frac{\ln(k+1)}{k}=0$ and $\lim\limits_{k\to\infty} \frac{k-2}{k}=1$, using Lemma \ref{lem: limsup limlinf equiv} we get that $\i\in\I_z$.

    Next, consider any sequence $(\varepsilon_k')\in c_0^+$. For every $k\in\N$, let $m(k)\in\N$ be such that $\varepsilon_{m(k)+1}<\varepsilon_k' \leqslant \varepsilon_{m(k)}$. Then for sufficiently large $k\in\N$,
    \[
        \frac{n_{\i, x_0}(\varepsilon_k')}{(\tfrac{1}{\varepsilon_k'})^z}\geqslant \frac{n_{\i, x_0}(\varepsilon_{m(k)})}{(\tfrac{1}{\varepsilon_{m(k)+1}})^z} \geqslant \frac{ (m(k)-2)\cdot 2^{(m(k)-2)z} }{2^{(m(k)+1)z}} \to \infty.
    \]
    In conlusion $\i\notin \I_{\psi_z}$. In fact, we have shown a stronger condition, since not only for any $R>0$,
    \[
        \sup\limits_{d(x_0, A)\leqslant R} \left| \frac{n_{\i, x_0}(\varepsilon_k)}{\psi_z(\varepsilon_k)} -1 \right|\nrightarrow 0,
    \]
    but
    \[
        \inf\limits_{x_0\in X} \frac{n_{\i, x_0}(\varepsilon_k)}{\psi_z(\varepsilon_k)} \to \infty.
    \]
\end{example}

One can ask a question, what happens to sets $\I_\psi$ and $\I_\varphi$, from Theorem \ref{thm:MAIN}, for different functions $\psi$ and $\varphi$. From now on we restrict to functions tending to $\infty$ when $\ve\to 0$.

\begin{lemma}\label{lem: finite distance implies I phi equal I psi}
Let $\psi,\varphi:(0,\infty)\longrightarrow(0,\infty)$ be such that $\psi(\varepsilon),\varphi(\varepsilon)\to\infty$ with $\varepsilon\to 0$. If
    \[
        \frac{\varphi(\varepsilon)}{\psi(\varepsilon)}\to 1\text{ as }\varepsilon\to 0,
    \]
    then $\I_\psi = \I_\varphi$.
\end{lemma}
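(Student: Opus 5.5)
By symmetry of the hypothesis (if $\varphi/\psi\to1$ then $\psi/\varphi\to1$), it suffices to prove the inclusion $\I_\psi\subseteq\I_\varphi$. The plan is to take $\i\in\I_\psi$ together with its witnessing sequence $(\varepsilon_k)\in c_0^+$, and to show that the \emph{same} sequence $(\varepsilon_k)$ witnesses $\i\in\I_\varphi$. No properties of the IFS or of the orbits are needed; the argument is purely about ratios of real numbers, uniformly in $x_0$.

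Fix $R>0$. For each $x_0$ with $d(x_0,A)\leqslant R$ write
\[
\frac{n_{\i,x_0}(\varepsilon_k)}{\varphi(\varepsilon_k)}=\frac{n_{\i,x_0}(\varepsilon_k)}{\psi(\varepsilon_k)}\cdot\frac{\psi(\varepsilon_k)}{\varphi(\varepsilon_k)}.
\]
Put $a_k^{x_0}:=\frac{n_{\i,x_0}(\varepsilon_k)}{\psi(\varepsilon_k)}$ and $c_k:=\frac{\psi(\varepsilon_k)}{\varphi(\varepsilon_k)}$. Since $\varepsilon_k\to0$, the hypothesis gives $c_k\to1$, and $c_k$ does not depend on $x_0$. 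By assumption $\sup_{x_0}|a_k^{x_0}-1|\to0$.

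The elementary estimate
\[
|a_k^{x_0}c_k-1|\leqslant |a_k^{x_0}-1|\,c_k+|c_k-1|
\]
then yields
\[
\sup_{d(x_0,A)\leqslant R}\Bigl|\frac{n_{\i,x_0}(\varepsilon_k)}{\varphi(\varepsilon_k)}-1\Bigr|\leqslant c_k\sup_{d(x_0,A)\leqslant R}|a_k^{x_0}-1|+|c_k-1|\to0,
\]
because $(c_k)$ is bounded. Equivalently, one could argue through Lemma \ref{lem: limsup limlinf equiv}, bounding the $\limsup$ of the supremum and the $\liminf$ of the infimum by $1$ from the appropriate sides. As $R$ was arbitrary, $\i\in\I_\varphi$.

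There is no real obstacle here. The only point requiring care is that the factor $c_k$ is independent of $x_0$, which is what makes the convergence uniform over $\{x_0:\ d(x_0,A)\leqslant R\}$; and the hypothesis $\varphi(\varepsilon)\to\infty$ plays no role beyond ensuring that both $\I_\psi$ and $\I_\varphi$ are defined in the same setting.
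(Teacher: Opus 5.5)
Your proof is correct and follows essentially the same route as the paper. Both arguments keep the same witnessing sequence $(\varepsilon_k)$, factor $n_{\i,x_0}(\varepsilon_k)/\varphi(\varepsilon_k)$ through $\psi(\varepsilon_k)$ with an $x_0$-independent correction factor, and conclude by symmetry; the only cosmetic difference is that the paper uses explicit two-sided bounds $1\pm\delta$ and $\frac{1\pm2\delta}{1\pm\delta}$, where you use the estimate $|ac-1|\leqslant|a-1|c+|c-1|$.
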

\begin{proof} Let $\i\in \I\varphi$. Then there exists $(\varepsilon_k)\in c_0^+$ such that for every $R>0$ we have 
    \begin{equation}\label{eq: i in I phi}
        \sup_{d(x_0,A)\leqslant R}\left|\frac{n_{\i,x_0}(\varepsilon_k)}{\varphi(\varepsilon_k)}-1\right|\to0.
    \end{equation}
    Fix $\delta>0$ and $R>0$. By (\ref{eq: i in I phi}), we can find $k_0\in\N$ such that for all $k\geqslant k_0$ and all $x_0\in X$ with $d(x_0, A)\leqslant R$, we have
    \[
        1-\delta\leqslant \frac{n_{\i,x_0}(\varepsilon_k)}{\varphi(\varepsilon_k)} \leqslant 1+\delta.
    \]
     Now since
    \[
        \frac{\varphi(\varepsilon_k)}{\psi(\varepsilon_k)}\to 1 \text{ as }k\to \infty,
    \]
    we can find $k_1\in\N$ such that for any $k\geqslant k_1$, we have
    \[
        \frac{1-2\delta}{1-\delta}\leqslant \frac{\varphi(\varepsilon_k)}{\psi(\varepsilon_k)} \leqslant \frac{1+2\delta}{1+\delta}.
    \]
   Since
    \[
        \frac{n_{\i,x_0}(\varepsilon_k)}{\psi(\varepsilon_k)} = \frac{n_{\i,x_0}(\varepsilon_k)}{\varphi(\varepsilon_k)}\cdot\frac{\varphi(\varepsilon_k)}{\psi(\varepsilon_k)},
    \]
    for any $k\geqslant \max\{k_0,k_1\}$ and for all $x_0\in X$ with $d(x_0, A)\leqslant R$, we have
    \[
        1-2\delta\leqslant\frac{n_{\i,x_0}(\varepsilon_k)}{\psi(\varepsilon_k)}\leqslant 1+2\delta.
    \]
    and thus
  \[
        \left|\frac{n_{\i,x_0}(\varepsilon_k)}{\psi(\varepsilon_k)}-1\right|\leq 2\delta.
    \]
    Hence, we showed that
    \[
        \sup_{d(x_0,A)\leqslant R}\left|\frac{n_{\i,x_0}(\varepsilon_k)}{\psi(\varepsilon_k)}-1\right|\to0,
    \]
    so $\i \in \I_\varphi$. Repeating the argument with $\psi$ and $\varphi$ swapped completes the proof.
\end{proof}
\begin{remark}\emph{
    Note that, if $\psi,\varphi:(0,\infty)\to(0,\infty)$ satisfy $\psi(\ve),\varphi(\ve)\to\infty$ for $\ve\to 0$, and
    \[
        d_\infty(\psi, \varphi) = \lVert \psi - \varphi \rVert_\infty := \sup_{\varepsilon>0} |\psi(\varepsilon) - \varphi(\varepsilon)| < \infty,
    \]
    then
    \[
        \frac{\varphi(\varepsilon)}{\psi(\varepsilon)}\to 1,\text{ as }\varepsilon\to 0.
    \]
In particular, if $d_\infty(\psi, \varphi)<\infty$, then $\I_\psi=\I_\varphi$.}
\end{remark}

The following corollary extends somehow Theorem \ref{thm:MAIN} for countable families of functions $\varphi$.
\begin{corollary}
    Let $\bm\psi=\{\psi_k: k\in\N\}$, where for $k\in\N$, $\psi_k:(0,\infty)\longrightarrow(0,\infty)$ is a function such that $\lim\limits_{\varepsilon\to 0} \psi_k(\varepsilon)=\infty$ and additionally, for each $k\in\N$
    \[
        \liminf_{n\to\infty} n \frac{N(C_{n})}{\psi_k(3C_{n})}=0.
    \]
    Then the set
    \[
        \I^{\bm\psi} :=\bigcap_{\psi\in \bm\psi^+} \I_\psi, \text{ where }\bm\psi^+:=\{\psi:\ \exists_{n\in\N}\ \lim\limits_{\varepsilon\to 0} \frac{\psi(\varepsilon)}{\psi_n(\varepsilon)}=1\},
    \]
    is residual in $I^\infty$.
\end{corollary}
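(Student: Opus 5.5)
The plan is to reduce the intersection over the possibly uncountable family $\bm\psi^+$ to a countable intersection, using Lemma \ref{lem: finite distance implies I phi equal I psi}, and then apply Theorem \ref{thm:MAIN} to each $\psi_k$.

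\textbf{Step 1.} Take any $\psi\in\bm\psi^+$. By definition there is $n\in\N$ with $\psi(\varepsilon)/\psi_n(\varepsilon)\to 1$ as $\varepsilon\to 0$. Since $\psi_n(\varepsilon)\to\infty$, this forces $\psi(\varepsilon)\to\infty$. So $\psi$ and $\psi_n$ satisfy the hypotheses of Lemma \ref{lem: finite distance implies I phi equal I psi}, and that lemma gives $\I_\psi=\I_{\psi_n}$. Here we implicitly use that $\psi$ maps into $(0,\infty)$, which is the standing convention adopted just before that lemma.

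\textbf{Step 2.} Conversely, each $\psi_k$ belongs to $\bm\psi^+$, witnessed by $n=k$. Combining this with Step 1, the family $\{\I_\psi:\psi\in\bm\psi^+\}$ coincides with the countable family $\{\I_{\psi_k}:k\in\N\}$. Hence
\[
\I^{\bm\psi}=\bigcap_{k\in\N}\I_{\psi_k}.
\]

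\textbf{Step 3.} Each $\psi_k$ satisfies $\lim_{\varepsilon\to0}\psi_k(\varepsilon)=\infty$ and the condition (\ref{eq: main assumptions}). Theorem \ref{thm:MAIN} therefore shows that every $\I_{\psi_k}$ is residual in $I^\infty$. A countable intersection of residual sets is residual, because a countable union of meager sets is meager. This gives the claim.

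The only point needing care is Step 1. Lemma \ref{lem: finite distance implies I phi equal I psi} requires both functions to tend to infinity, so we must check that this property passes from $\psi_n$ to $\psi$. Apart from that, the argument is a direct bookkeeping reduction from an arbitrary family to a countable one.
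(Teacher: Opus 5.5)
Your proposal is correct and matches the paper's proof: the paper also reduces $\I^{\bm\psi}$ to $\bigcap_{k\in\N}\I_{\psi_k}$ via Lemma \ref{lem: finite distance implies I phi equal I psi}, then implicitly invokes Theorem \ref{thm:MAIN} and the fact that a countable intersection of residual sets is residual. You also check explicitly that $\psi(\varepsilon)\to\infty$ follows from $\psi/\psi_n\to 1$, a detail the paper leaves to its standing convention.
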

\begin{proof}
It is enough to show that 
    \[
        \I^{\bm\psi}= \bigcap_{k\in\N}\I_{\psi_k} \]

    Clearly $\I^{\bm\psi} \subseteq \bigcap_{k\in\N} \I_{\psi_k}$. To see the converse one, note that if $\psi\in \bm\psi^+$, then
    there exists $\psi_k$ such that $\lim\limits_{\varepsilon\to 0} \frac{\psi(\varepsilon)}{\psi_k(\varepsilon)}=1$ and by Lemma \ref{lem: finite distance implies I phi equal I psi}, we have that $\I_{\psi_k}=\I_\psi$.

\end{proof}

\section{Champernowne and de Bruijn sequences}

In this section we study some properties of drivers that ensure "fast" convergence of the chaos game. The aim is to obtain a result analogous to Theorem \ref{thm: as Champ fast conv}. Again, we assume that $\F=\{f_i:i\in I\}$, where $I=\{1,...,K\}$, is a Banach contractive IFS on a complete metric space $X$ and $A$ is its attractor. For this section, we assume that $K\geqslant 2$.
\begin{example}(a Champernowne sequence)\\
First let us consider a classical Champernowne driver $\i$ (see \cite{Cal}), that is, a sequence in which we first write all words of length $1$, then all words of length $2$ and so on. Then for any $m\in\N$,
$$
\mathbf{n}_\i(m)\leqslant K+2K^2+...+mK^m=\frac{K-K^{m+1}(m+1)+mK^{m+2}}{(K-1)^2}
$$
In particular,
$$
\limsup_{m\to\infty}\frac{\mathbf{n}_\i(m)}{mK^m}\leqslant\frac{K}{K-1}
$$
Using \cite[Lem. 15(b)]{LSS1}, for any $x_0\in X$, we get

\begin{equation}\label{eq:champerr}
n_{\i,x_0}(c_{m}(x_0))\leqslant \mathbf{n}_\i(m)
\end{equation}where $c_m(x_0):=L^m(\diam A+d(x_0,A))$. Thus
\begin{equation}\label{eq:champer}
\limsup_{m\to\infty}\big(\sup_{x_0\in X}\frac{n_{\i,x_0}(c_{m}(x_0))}{mK^m}\big)\leqslant \limsup_{m\to\infty}\frac{\mathbf{n}_\i(m)}{mK^m} \leqslant\frac{K}{K-1},
\end{equation}

This means that for any  $C>\frac{K}{K-1}$, there exists $m_0\in\N$ such that for every $x_0\in X$ and $m\geq m_0$, 
\begin{equation}\label{eq:champer2}
n_{\i,x_0}(c_{m}(x_0))\leqslant CmK^m.
\end{equation}
\end{example}
A natural question arises whether we can have fastest convergence as in (\ref{eq:champer}) and in consequence in (\ref{eq:champer2}).\\
Recall that a finite sequence $\i$ is called \emph{noncyclic de Bruijn sequence of order $m$} (see \cite{All}), if every finite word of length $m$ appears in $\i$ exactly once. This ensures an optimal "packing" of finite words. The length of a noncyclic de Bruijn sequence of order $m$ is exactly $K^m+m-1$. The following result from \cite{Bech} show the possibility of extending de Bruijn sequences (note that in item (2) we cannot replace $m+2$ by $m+1$).
\begin{theorem}\label{thm:deBrui}\cite[Thm. 1]{Bech}$\;$\\
(1) If $K\geq 3$, then every de Bruijn sequence of order $m$ can be extended to a de Bruijn sequence of order $m+1$.\\
(2) If $K=2$, then every de Bruijn sequence of order $m$ can be extended to a de Bruijn sequence of order $m+2$.
\end{theorem}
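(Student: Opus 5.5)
The statement is \cite[Thm. 1]{Bech}. The plan is to recast it as extending a trail to an Eulerian trail in a de Bruijn graph, and then to check a connectivity condition block by block.

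\textbf{Setup.} Let $G_n$ be the de Bruijn digraph whose vertices are the words of length $n$ over $I$. There is an edge $aw\to wb$ for every word $awb$ of length $n+1$. Every vertex has in- and out-degree $K$, and $G_n$ is strongly connected.

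A noncyclic de Bruijn sequence of order $n+1$ is the same thing as an Eulerian trail in $G_n$, read off as a word of length $K^{n+1}+n$. Fix a de Bruijn sequence $\sigma$ of order $m$ and a target order $r\in\{m+1,m+2\}$. Reading $\sigma$ through a window of length $r$ gives a trail $P$ in $G_{r-1}$ from a vertex $s$ to a vertex $t$.

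\textbf{Reduction.} Delete the edges of $P$ and call the result $G'$. In $G'$ the vertex $t$ has out-degree one more than its in-degree, $s$ has in-degree one more than its out-degree, and every other vertex is balanced. If $s=t$, then all vertices are balanced. By Euler's theorem, $G'$ has an Eulerian trail starting at $t$ as soon as all of its edges lie in one weakly connected component together with $t$. Appending this trail to $P$ gives the desired extension of $\sigma$. So everything reduces to proving weak connectivity of $G'$.

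\textbf{Block decomposition.} For each word $u$ of length $r-2$, let $Q_u$ be the set of edges $xuy$ of $G_{r-1}$, with $x,y\in I$. The sets $Q_u$ partition the edges of $G_{r-1}$. Each $Q_u$ is a copy of the complete bipartite graph $K_{K,K}$ joining the sources $\{xu\}$ to the targets $\{uy\}$.

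Two blocks $Q_u$ and $Q_{u'}$ share a vertex whenever $u'$ is a one-step shift of $u$, that is, $u\to u'$ is an edge of $G_{r-2}$. Since $G_{r-2}$ is connected, it suffices to show two things:
\begin{itemize}
\item[(a)] each block stays connected after deleting the edges of $P$ that lie in it;
\item[(b)] each vertex of $G_{r-1}$ still meets at least one remaining edge.
\end{itemize}
Given (a) and (b), any two remaining edges can be joined by walking along the chain of blocks given by a path in $G_{r-2}$.

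\textbf{Checking the two cases.}
\begin{itemize}
\item[(1)] Case $K\ge 3$, $r=m+1$. Here $P$ is a Hamiltonian path in $G_m$, so each vertex loses at most one out-edge and at most one in-edge. Hence $P\cap Q_u$ is a partial matching in $K_{K,K}$. Removing a matching from $K_{K,K}$ with $K\ge3$ leaves a connected graph in which every vertex has degree at least $K-1\ge 2$. This gives (a) and (b).
\item[(2)] Case $K=2$, $r=m+2$. Here $K_{2,2}$ minus a perfect matching is disconnected, which explains why $m+1$ fails. We therefore work one level up. An edge of $Q_u$ is an $(m+2)$-word $xuy$ with $|u|=m$. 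Since $\sigma$ contains the word $u$ exactly once, at most one edge of $Q_u$ belongs to $P$. A $K_{2,2}$ minus one edge is still connected, and every vertex keeps degree at least $1$. This gives (a) and (b).
\end{itemize}

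\textbf{Expected obstacle.} The main delicate step is the connectivity bookkeeping. One must check that the vertices $s$ and $t$ are not isolated. One must also check that gluing blocks along paths of $G_{r-2}$ really connects every remaining edge. In particular, a shared vertex between two consecutive blocks must keep at least one remaining edge in each of them, which is exactly what property (b) is for.
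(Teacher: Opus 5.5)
The paper gives no proof of this statement. It is quoted from \cite{Bech} and used only to motivate the definition of infinite de Bruijn drivers, so your argument replaces a citation rather than an argument in the text.

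Your proof is correct. Reading $\sigma$ through windows of length $r$ gives a trail $P$ in $G_{r-1}$ whose word is $\sigma$. Extensions of order $r$ are exactly the Eulerian trails of $G_{r-1}$ that begin with $P$, and Euler's theorem reduces everything to weak connectivity of $G'$. The block decomposition then carries the proof. For $K\ge 3$, the deleted edges in each $Q_u$ form a partial matching, and $K_{K,K}$ minus a matching is connected. For $K=2$, an edge $xuy$ of $P$ pins down the unique occurrence of the $m$-word $u$ in $\sigma$, so each block loses at most one edge; this is exactly what passing to order $m+2$ buys. Compared with the bare citation, your argument shows explicitly why $K\ge 3$ matters and why two levels are needed when $K=2$.

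A few points to tighten:
\begin{enumerate}
\item Property (b) as you state it is weaker than what the gluing step uses. The vertex $avb$ shared by $Q_{av}$ and $Q_{vb}$ must keep a remaining in-edge in $Q_{av}$ and a remaining out-edge in $Q_{vb}$. Your case checks do give this, since every vertex keeps in- and out-degree at least $K-1\ge 1$. So state (b) in that form.
\item For a constant word $u=a^{r-2}$ the block is not literally $K_{K,K}$. The source $au$ and the target $ua$ are the same vertex $a^{r-1}$, and $a^r$ is a loop. For $r=2$ the only block is the complete digraph with loops on $I$. Such a block is a quotient of $K_{K,K}$, so connectivity after the deletions is inherited, but this deserves a sentence.
\item Your aside that $K_{2,2}$ minus a perfect matching explains why $m+1$ fails is only heuristic, because blockwise connectivity is sufficient but not necessary. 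In fact, for $m=1$ the extension to order $2$ exists: $01$ extends to $01100$. For $m\ge 2$, the real obstruction is that one of $0^m,1^m$ is an interior vertex of the Hamiltonian path $P$ in $G_m$. That vertex keeps only its loop in $G'$, which disconnects $G'$. This lies outside the statement and does not affect your proof.
\end{enumerate}
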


\begin{example}(an infinite de Bruijn sequence).\\
Motivated by Theorem \ref{thm:deBrui}, in \cite{Bech} there was introduced the following definition (\cite[Def. 6]{Bech}):\\
(1) If $K\geq 3$, then an infinite driver $\i$ is called as \emph{an infinite de Bruijn sequence}, if it is the inductive limit of extending de Bruijn sequences of order $m$,
for each $m$.\\
(2) If $K=2$, then an infinite driver $\i$ is called as \emph{an infinite de Bruijn sequence}, if it is the inductive limit of extending de Bruijn sequences of order $2m$, for each $m$.\\
Let $\i$ be an infinite deBruijn sequence. Observe that for every $m\in\N$,\\
if $K\geq 3$, then
$$
\mathbf{n}_\i(m)\leqslant K^m+m-1
$$
and if $K=2$, then
$$
\mathbf{n}_\i(m)\leqslant \left\{\begin{array}{ccc}K^m+m-1&\mbox{if}&m\;\mbox{is even}\\
K^{m+1}+m&\mbox{if}&m\;\mbox{is odd}\end{array}\right.
$$
For simplicity of notation, set 
$$\alpha(K):=\left\{\begin{array}{ccc}1&\mbox{if}&K\geq 3\\2&\mbox{if}&K=2\end{array}\right.$$
Again using (\ref{eq:champerr}) and the above estimations, we have
\begin{equation}\label{eq:debrui}
\limsup_{m\to\infty}\big(\sup_{x_0\in X}\frac{n_{\i,x_0}(c_{m}(x_0))}{K^m}\big)\leqslant \limsup_{m\to\infty}\frac{\mathbf{n}_\i(m)}{K^m} \leqslant \limsup_{m\to\infty}\frac{\mathbf{n}_\i(m)}{K^m}\leqslant\alpha(K),
\end{equation}
where again $c_m(x_0):=L^m(\diam A+d(x_0,A))$. In fact, for $K\geq 3$ we have a limit above: $\lim_{m\to\infty}\frac{n_{\i,x_0}(c_m(x_0))}{K^m}=1$, as nothing below $1$ can be reached.\\
Condition (\ref{eq:debrui}) implies that for any  $C>\alpha(K)$, there exists $m_0\in\N$ such that for every $x_0\in X$ and $m\geq m_0$, 
\begin{equation}\label{eq:debrui2}
n_{\i,x_0}(c_{m}(x_0))\leqslant CK^m.
\end{equation}
Observe that such drivers give better rate of convergence than a Chambernowne sequence as (\ref{eq:debrui}) implies
$$
\lim_{m\to\infty}\frac{n_{\i,x_0}(c_{m}(x_0))}{mK^m}=0.
$$

Now we give some insight how infinite de Bruijn drivers behave from the perspective of Theorem \ref{thm:MAIN}. Namely, for $x_0\in X$, we find a function $\psi_{x_0}$, which bounds $n_{\i,x_0}(\ve)$ in the limit $\ve\to 0$.

Fix any $x_0\in X$. Define $\psi_{x_0}(\ve):(0,\infty)\longrightarrow (0,\infty)$ with the formula
\[
    \psi_{x_0}(\ve):=K\cdot \alpha(K)\cdot (\diam A+d(x_0,A))^{\log_L(K)}\cdot \bigg(\frac{1}{\ve}\bigg)^{\log_L(K)},\ \ve>0.
\]
Note that for any $m\in\N$, we get that
\[
    \psi_{x_0}(c_{m-1}(x_0)) = K^{m}\cdot \alpha(K).
\]
Let $\ve>0$. Find $m\in \N$ such that $c_m\leqslant\ve< c_{m-1}$. Then by (\ref{eq:debrui}), we have
\[
    \limsup_{\ve\to 0}\frac{n_{\i,x_0}(\ve)}{\psi_{x_0}(\ve)}\leqslant \limsup_{m\to \infty} \frac{n_{\i, x_0}(c_m(x_0))}{\psi_{x_0}(c_{m-1}(x_0))}\leqslant \limsup_{m\to \infty} \frac{\mathbf{n}_\i(m)}{\psi_{x_0}(c_{m-1}(x_0))} = \limsup_{m\to \infty} \frac{\mathbf{n}_\i(m)}{K^{m}\cdot \alpha(K)} \leqslant 1.
\]

Note that a similar reasoning can be repeated for a Champernowne driver. That is, we can find analogous ,,bounding'' function $\psi$. But its formula would be more complicated, so we omit it for the sake of readability.
\end{example}

\section*{Statements and Declarations}

\subsection*{Author Contributions}
All authors contributed to the study conception and design. All authors read and approved the final manuscript.

\subsection*{Funding}
No funds, grants, or other support was received.

\subsection*{Data Availability}
No datasets were generated or analysed during this study.

\subsection*{Competing Interests}
The authors have no competing interests to declare that are relevant to the content of this article.

\bibliographystyle{amsplain}

\end{document}